\numberwithin{equation}{section}
\newtheorem{thm}{Theorem}
\newtheorem{lem}[thm]{Lemma}
\newtheorem{prop}[thm]{Proposition}
\newtheorem{defi}[thm]{Definition}
\newtheorem{?}[thm]{Problem}
\theoremstyle{remark}
\newtheorem{remark}[thm]{Remark}
\newtheorem{example}[thm]{Example}
\def\L{\mathbf{L}}
\def\la{\lambda}
\def\cyc{\mathsf{cyc}}
\def\wex{\mathsf{wex}}
\def\inv{\mathsf{inv}}
\def\ind{\mathsf{ind}}
\def\rl{\mathsf{rl}}
\def\rec{\mathsf{rec}}
\def\cros{\mathsf{cros}}
\def\BC{\mathcal{BC}}
\def\cw{\mathsf{cw}}
\def\cs{\mathsf{cd}}
\def\C{\mathbb{C}}
\def\Z{\mathbb{Z}}
\def\N{\mathbb{N}}
\def\F{\mathcal{F}}
\def\Lin{\mathcal Lin}
\def\S{{\mathfrak S}}
\newcommand{\bea}{\begin{align}} 
\newcommand{\ena}{\end{align}}
\newcommand\lc{\mathcal{LC}}
\begin{document}

\title[Combinatorics of $(q,y)$-Laguerre  polynomials]{Combinatorics
 of  $(q,y)$-Laguerre polynomials and their moments}
\author{Qiongqiong Pan}
\address{Univ Lyon,  Universit\'e Lyon1,
UMR 5208 du CNRS, Institut Camille Jordan\\
F-69622, Villeurbanne Cedex, France}
\email{qpan@math.univ-lyon1.fr}
\author{Jiang Zeng}\address{Univ Lyon,  Universit\'e Lyon1,
UMR 5208 du CNRS, Institut Camille Jordan\\
F-69622, Villeurbanne Cedex, France}
\email{zeng@math.univ-lyon1.fr}
\dedicatory{To Christian  Krattenthaler  on the occasion of his 60th birthday}

\keywords{Laguerre polynomials,
$(q,y)$-Laguerre polynomials, Al-Salam--Chihara polynomials, moments,
linearization coefficients, moments, inversion numbers, rook polynomials,  
matching polynomials}
\subjclass[2010]{Primary 05A18; Secondary 05A15, 05A30}
\begin{abstract} 
We consider a $(q,y)$-analogue of Laguerre polynomials 
$L^{(\alpha)}_n(x;y\,|\,q)$  for  integral  $\alpha\geq -1$, which turns out to be  a rescaled version of Al-Salam--Chihara polynomials. 
A combinatorial interpretation  for  the $(q,y)$-Laguerre polynomials  is given  using a colored version of Foata and Strehl's Laguerre configurations with suitable statistics.
When $\alpha\geq 0$,  the  corresponding moments  are described using certain 
classical statistics on permutations, and 
 the linearization coefficients   are  proved to be a polynomial in $y$ and $q$ with nonnegative integral coefficients.
\end{abstract}
\maketitle



\thispagestyle{myheadings}
\font\rms=cmr8 
\font\its=cmti8 
\font\bfs=cmbx8

\markright{\its S\'eminaire Lotharingien de
Combinatoire \bfs 81 \rms (2020), Article~B81e\hfill}
\def\thepage{}


\section{Introduction}
The  monic Laguerre polynomials $L^{(\alpha)}_n(x)$  are defined 
by the generating function
\begin{align}\label{gf:laguerre}
(1+t)^{-\alpha-1}\exp\left(\frac{xt}{t+1}\right)=\sum_{n=0}^\infty L^{(\alpha)}_n(x) \frac{t^n}{n!}.
\end{align}
They are   the multiple of the usual (general) Laguerre polynomials~\cite[pp.~241--242]{KLK10}    by
 $(-1)^n n!$. We have  the   explicit formula 
\begin{equation}\label{eq:explag}
L^{(\alpha)}_n(x)=\sum_{k=0}^{n} (-1)^{n-k}\frac{n!}{k!}\,\binom{n+\alpha}{  n-k} x^{k}
\end{equation}
and the   three-term recurrence relation
\begin{align}\label{eq:recurr}
L^{(\alpha)}_{n+1}(x)=(x-(2n+\alpha+1))L^{(\alpha)}_n(x)-n(n+\alpha)L^{(\alpha)}_{n-1}(x).
\end{align}
The Laguerre polynomials $L_n^{(\alpha)}(x)$ are  orthogonal with respect to 
the moments 
${\mathcal L}(x^n)=(\alpha+1)_n$, where $(x)_n=x(x+1)\cdots (x+n-1)$ ($n\geq 1$) is the shifted  factorial with 
$(x)_0=1$,  and  ${\mathcal L}$ is the linear functional defined by
\begin{equation}\label{eq:momentsimp}
{\mathcal L}(f)=\frac{1}{\Gamma(\alpha+1)}\int_0^{\infty} f(x) x^{\alpha}e^{-x}dx.
\end{equation}
The linearization formula~\cite{Zeng92} reads as follows:
\begin{equation}\label{eq:lin}
{{\mathcal L}}(L^{(\alpha)}_{n_1}(x)L^{(\alpha)}_{n_2}(x)L^{(\alpha)}_{n_3}(x))= \sum_{s\geq
0}\frac{n_1!\,n_2!\,n_3!\,2^{n_1+n_2+n_3-2s}\,(\alpha+1)_s}
{(s-n_1)!\,(s-n_2)!\,(s-n_3)!\,(n_1+n_2+n_3-2s)!}.
\end{equation}

A combinatorial model for Laguerre polynomials with parameter $\alpha$ 
 was first  given by Foata and Strehl~\cite{FS84}. Recall that 
a \emph{Laguerre configuration} on $[n]:=\{1, \ldots, n\}$   is a pair 
$(A, f)$, where  $A\subset [n]$ and $f$ is an injection 
from $A$ to $[n]$.   
A Laguerre configuration can be depicted 
by a digraph on $[n]$ by drawing  an edge $i\to j$ if and only if 
$f(i)=j$. Clearly, such a graph has  two types of connected components called \emph{cycles} and \emph{paths}, see Figure~\ref{lagconf1}. 
 Let $\lc_{n,k}$ be the set of Laguerre configurations $(A, f)$ on $[n]$ 
with $|A|=n-k$. Then Foata and Strehl's interpretation~\cite{FS84} 
reads
\begin{align}\label{eq:FS}
\sum_{(A, f)\in \lc_{n,k}} (\alpha+1)^{\cyc(f)}=\frac{n!}{k!}\,
\binom{n+\alpha}{  n-k},
\end{align}
where $\cyc(f)$ is  the number of cycles of $f$.

\pagenumbering{arabic}
\addtocounter{page}{1}
\markboth{\SMALL QIONGQIONG PAN AND JIANG ZENG}{\SMALL COMBINATORICS OF
$(q,y)$-LAGUERRE POLYNOMIALS}

\begin{figure}[t]
\begin{tikzpicture}[scale=1.4]
\draw (0,0) rectangle (5.5,2.5);
\draw[line width=0.8,dashed] (4,2.5)--(4,0);
\draw[->] (0.5,1.5) to [out=-30, in=30] (0.55,0.52);
\draw[->] (0.5,0.5) to [out=120, in=210] (0.48,1.47);
\fill (0.5,1.5) circle (1.1pt);
\fill (0.5,0.5) circle (1.1pt);
\node[above] at (0.5,1.5) {$4$};
\node[below] at (0.5,0.5) {$7$};

\draw[->] (1.5,1.5) to [out=-30, in=60] (1.7,1);
\draw[->] (1.5,0.5) to [out=120, in=210] (1.45,1.5);
\draw[->] (1.7,0.95) to [out=-120,in=60] (1.5,0.55);
\fill (1.5,1.5) circle (1.1pt);
\fill (1.5,0.5) circle (1.1pt);
\fill (1.7,0.95) circle (1.1pt);
\node[above] at (1.5,1.5) {$2$};
\node[below] at (1.5,0.5) {$5$};
\node[right] at (1.7,0.95) {$13$};

\fill (2.5,1.5) circle (1.1pt);
\draw (2.5,1.5) to [out=-30,in=30] (2.5,1);
\draw [->](2.5,1) to [out=150,in=210] (2.48,1.45);
\node[above] at (2.5,1.5) {$14$};

\fill (1,2) circle (1.1pt);
\draw (1,2) to [out=-30,in=30] (1,1.5);
\draw [->](1,1.5) to [out=150,in=210] (0.98,1.95);
\node[above] at (1,2) {$15$};

\draw[->](3.6,1)--(3.6,0.55);
\draw[->](3.6,0.5)--(4.35,0.5);
\fill (3.6,1) circle (1.1pt);
\fill (3.6,0.5) circle (1.1pt);
\fill (4.4,0.5) circle (1.1pt);
\node[left] at (3.6,1){$12$};
\node[below] at (3.6, 0.5){$6$};
\node[right] at (4.4,0.5){$11$};

\draw[->] (3.6,1.4)--(4.3,1.4);
\fill (3.6,1.4) circle (1.1pt);
\fill (4.35,1.4) circle (1.1pt);
\node[left] at (3.6,1.4){$10$};
\node[right] at (4.4,1.4){$8$};

\draw[->] (3.6,2)--(4.3,2);
\fill (3.6,2) circle (1.1pt);
\fill (4.35,2) circle (1.1pt);
\node[left] at (3.6,2) {$1$};
\node[right] at (4.4,2) {$3$};

\fill (5,1.8) circle (1.1pt);
\node[right] at (5, 1.8){$9$};
\node[above] at (2,2.1){$\bf{A}$};
\end{tikzpicture}
\caption{A Laguerre configuration  $(A, f)$ on $[15]$ with 
$A=[15]\setminus \{3, 8, 9,11\}$.}
\label{lagconf1}
\end{figure}
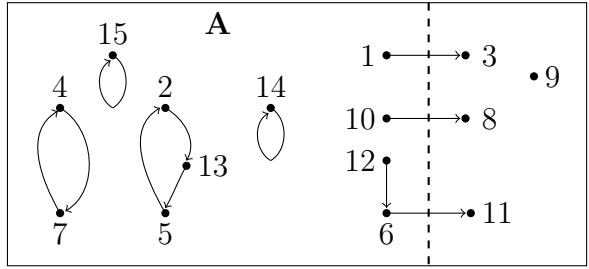

Note that one can derive \eqref{eq:FS} from any of the three formulas~\eqref{gf:laguerre}--\eqref{eq:recurr}, see \cite{FS84, BLL98}. 
The aim of this paper is 
to  study combinatorial aspects of more general 
 $(q,y)$-Laguerre polynomials 
$L_n^{(\alpha)}(x;y\,|\,q)$ ($n\geq 0$) defined  by 
 the three term-recurrence relation 
\begin{multline}\label{eq:recurrqlaguerre}
L^{(\alpha)}_{n+1}(x;y\,|\,q)=\left(x-(y[n+\alpha+1]_q+[n]_q)\right)L^{(\alpha)}_n(x;y\,|\,q)\\
-y[n]_q[n+\alpha]_q\,L^{(\alpha)}_{n-1}(x;y\,|\,q),
\qquad \alpha\geq -1, \; n\geq 1,
\end{multline}
with $L^{(\alpha)}_{0}(x;y\,|\,q)=1$, $L^{(\alpha)}_{-1}(x;y\,|\,q)=0$. 
Here  and  throughout this paper, we  use the standard $q$-notations:
$[n]_q=\frac{1-q^n}{1-q}$ for $n\geq 0$,
the $q$-analogue of $n$-factorial 
$n!_q=\prod_{i=1}^n[i]_q$, and the $q$-binomial coefficient
$$
\begin{bmatrix}n\\ k\end{bmatrix}_q=\frac{n!_q}{k!_q\, (n-k)!_q}\quad \textrm{for}\quad 0\leq k\leq n.
$$
Clearly we have $L^{(\alpha)}_{n}(x;1\,|\,1)=L^{(\alpha)}_{n}(x)$.   
Kasraoui et al.~\cite{KSZ11} gave a combinatorial interpretation for the linearization coefficients  of the polynomials $L^{(0)}_{n}(x;y\,|\,q)$ and pointed out that 
a combinatorial model for $L^{(0)}_{n}(x;y\,|\,q)$ can be derived from Simion 
and Stanton's model for octabasic $q$-Laguerre polynomials 
in \cite{SS96}.  For $k\in \Z$, let 
$$\N_k:=\{n\in \Z: n\geq k\}$$
and $\N:=\N_1$.
Recently, using the theory of  $q$-Riordan matrices,  Cheon, Jung and  Kim~\cite{CJK19}  derived a  combinatorial model for  the $q$-Laguerre polynomials 
$L^{(\alpha)}_{n}(x;q\,|\,q)$ when  $\alpha\in \N_0$.
  It is then natural to search for a   combinatorial structure unifying 
the above two special cases, as was alluded to at the end of \cite{CJK19}.
Our first goal is to give such a combinatorial model for $L^{(\alpha)}_{n}(x;y\,|\,q)$ with 
 variable $y$ and integer $\alpha\in \N_{-1}$  by using a $q$-analogue of 
Foata and Strehl's  Laguerre configurations.   
Moreover, for $\alpha\in \N_0$, the $(q,y)$-Laguerre polynomials 
$L_n^{(\alpha)}(x;y\,|\,q)$ are orthogonal polynomials.
It is our second goal
to give a combinatorial interpretation for the moments of $(q,y)$-Laguerre polynomials and prove that the linearization coefficients are polynomials in $y$ and $q$ with nonnegative integral coefficients.
We achieve this by making use of
the combinatorial theory of continued fractions.
  
By \eqref{eq:recurrqlaguerre}, the first few values of $L_n^{(\alpha)}(x;y\,|\,q)$ are 
\begin{align*}
L^{(\alpha)}_1(x;y\,|\,q)&=x-y[\alpha+1]_q,\\
L^{(\alpha)}_2(x;y\,|\,q)&=x^2-\left(y[\alpha+1]_q+y[\alpha+2]_q+1\right)x+[\alpha+1]_q[\alpha+2]_qy^2,\\
L^{(\alpha)}_3(x;y\,|\,q)&={x}^{3}-\left(y([\alpha+1]_q+[\alpha+2]_q+[\alpha+3]_q)+2+q)x^2\right.\\
&\kern1cm
+\left({y}^{2}([\alpha+1]_q[\alpha+2]_q+[\alpha+2]_q[\alpha+3]_q+[\alpha+1]_q[\alpha+3]_q)\right.\\
&\kern1cm
+\left.y([\alpha+3]_q+[2]_q[\alpha+1]_q)+[2]_q\right)x-y^3[\alpha+1]_q[\alpha+2]_q[\alpha+3]_q.
\end{align*}
For convenience, we  introduce the signless $(q,y)$-Laguerre polynomials
\begin{align}\label{lag-combinatrorial}
\L^{(\alpha)}_n(x;y\,|\,q):=(-1)^nL^{(\alpha)}_n(-x;y\,|\,q)=\sum_{k=0}^n  
\ell^{(\alpha)}_{n,k}(y;q)x^k.
\end{align}
For $\alpha\in \N_{-1}$, 
we observe that $\ell^{(\alpha)}_{n,k}(y;q)$  is a
polynomial in $y, q$ with nonnegative integral coefficients, which 
is far from  obvious from the explicit Formula~\eqref{eq:explaguerre}.
For $\alpha\in \N_{-1}$, Formula~\eqref{eq:FS} implies that  
$\ell^{(\alpha)}_{n,k}(1;1)$ is equal to the number of Laguerre configurations in $\lc_{n,k}$ such that each cycle 
carries a \emph{color} $\in [1+\alpha]$. 
In particular,  the number of   Laguerre configurations in $\lc_{n,k}$ without cycles 
(i.e., consisting of only $k$ paths)  is equal to the \emph{Lah numbers}~\cite{Lah55}:
$$
\ell^{(-1)}_{n,k}(1;1)=\frac{n!}{k!}\,\binom{n-1}{  k-1}.
$$
\begin{remark}
Two different   $q$-analogues of Lah numbers were  defined and studied 
 by Garsia and Remmel~\cite{GR80} and Lindsay et al.~\cite{LMS11}, respectively. 
 Moreover an elliptic analogue of Garsia and Remmel's $q$-Lah numbers was constructed 
 by Schlosser and Yoo~\cite{SY17}.
 \end{remark}

The organization of this paper is as follows. In Section~2 we identify the $(q,y)$-Laguerre polynomials as a rescaled version of  Al-Salam--Chihara polynomials and derive 
 several  expansion formulas  for $(q,y)$-Laguerre polynomials. 
 In Section~3 we present  
   a combinatorial interpretation for the  $(q,y)$-Laguerre polynomials 
   in terms of $\alpha$-Laguerre configurations, which are in essence 
    the product structure of ``cycles'' and ``paths''.
 In Section~4 we give 
 a combinatorial interpretation for  the moments of  $(q,y)$-Laguerre polynomials and  prove  that the linearization coefficients are polynomials  in $y$ and $q$ with nonnegative  integral coefficients. 
As the Laguerre polynomials play an important role
in the theory of rook polynomials, 
we translate  our  $\alpha$-Laguerre configurations in 
terms of rook placements in Section~5 and 
 set up the connection  
 between our 
$\alpha$-Laguerre configurations and the matching model  of \emph{complete bipartite graphs} $K_{n,n+\alpha}$ (see Godsil and Gutman~\cite{GG81}).

\section{A detour to Al-Salam--Chihara polynomials}

The $q$-Pochhammer symbol or $q$-shifted factorial $(a;q)_n$ is
 defined by
\[
    (a;q)_n= \begin{cases}
        \prod_{i=0}^{n-1}(1-aq^{i}) & \text{for } n\in \Z^+\cup\{\infty\},\\
        1& \text{for } n=0.
        \end{cases}
  \]
The Al-Salam--Chihara polynomials $Q_n(x):=Q_n(x;a,b\,|\,q)$ are defined 
by the  generating
function (see \cite[Chapter~14]{KLK10})
\begin{align}\label{gf-AC} 
\sum_{n=0}^\infty Q_n(x;a,b\,|\,q)\frac{t^n}{(q;q)_n}=
\frac{(at,bt;q)_\infty}{(te^{i\theta}, te^{-i\theta};q)_\infty},
\end{align}
with $(a,b;q)_\infty=(a;q)_\infty(b;q)_\infty$, and 
they satisfy  the recurrence relation~(op.\ cit.)
\begin{align}\label{eq:normalizedrecurr} 
\begin{cases}
Q_{-1}(x)=0,\quad Q_0(x)=1,\\
Q_{n+1}(x)=(2x-(a+b)q^n)Q_n(x)-
(1-q^n)(1-ab{q}^{n-1})Q_{n-1}(x),\quad n\geq 0.
\end{cases}
\end{align}
We have the explicit formula  
\begin{align}\label{formula1}
Q_{n}(x;a,b\,|\,q) =\frac{(ab;q)_{n}}{a^n}\,
\sum_{k=0}^n \frac{(q^{-n};q)_k\,(au;q)_k\,(au^{-1};q)_k}{(ab;q)_k(q;q)_k}q^k,
\end{align}
where $x=\frac{u+u^{-1}}2$ or $x=\cos \theta$ if $u=e^{i\theta}$.

 Comparing \eqref{eq:recurrqlaguerre} with  \eqref{eq:normalizedrecurr} 
 and using \eqref{lag-combinatrorial},
we see that our polynomials $\L^{(\alpha)}_n(x;y\,|\,q)$  are a
rescaled  version of the Al-Salam--Chihara polynomials:
\begin{equation}\label{eq:polydef}
\L^{(\alpha)}_n(x;y\,|\,q)=\left(\frac{\sqrt{y}}{1-q}\right)^n
Q_n\left(\frac{(1-q)x+y+1}{2\sqrt{y}}; \frac{1}{\sqrt{y}},
\sqrt{y}q^{\alpha+1}\,\bigg|\,q\right).
\end{equation}

The Al-Salam--Chihara polynomials  (see \cite[pp.~455--456]{KLK10} and \cite{Ism09}) are orthogonal with respect to the  linear functional
$\hat{\mathcal L}_q$ defined by 
\begin{align}
\hat{\mathcal L}_q(f)=\frac{(q, ab;q)_\infty}{2\pi}\; \int_{-1}^{+1}
\frac{f(x)dx}{\sqrt{1-x^2}}\prod_{k=0}^\infty 
\frac{1-2(2x^2 -1) q^k + q^{2k}}{[1-2x aq^k+ a^2q^{2k}][1-2x bq^k+ b^2q^{2k}]} .
\end{align}
Hence, for $\alpha\in \N_0$,  the polynomials $L^{(\alpha)}_n(x;y\,|\,q)$ are orthogonal with respect to the  linear functional
$\mathcal L_q$ given by
\begin{multline}\label{def:mom}
{\mathcal L}_{q}(f) = \frac{(q, q^{\alpha+1};q)_\infty }{2\pi}\frac{1-q}{2\sqrt{y}}\; \int_{B_{-}}^{B_{+}}
 \frac{f(x)dx }{\sqrt{1-v(x)^2}}   \\
\times \prod_{k=0}^\infty\frac{[1-2(2v(x)^2-1) q^k + q^{2k}]}{[1-2v(x)q^k/\sqrt{y}+ q^{2k}/y]
[1-2v(x)  q^{k+\alpha+1}\sqrt{y} + q^{2k+2\alpha+2}y]},
\end{multline}
where $B_{\pm} = \frac{(1\pm \sqrt{y})^2}{1-q} $ and 
\begin{align}
v(x) = \frac{1}{2\sqrt{y}} ((q-1)x +(y+1)).
\end{align}
Now,  by \eqref{eq:polydef}, we may derive an explicit formula from \eqref{formula1}, namely
 \begin{equation}\label{eq:explaguerre}
\L^{(\alpha)}_n(x;y\,|\,q)=\sum_{k=0}^n \frac{n!_q}{k!_q}\,\begin{bmatrix}n+\alpha\\ k+\alpha\end{bmatrix}_q
q^{k(k-n)}y^{n-k} \prod_{j=0}^{k-1}\left(x+(1-y q^{-j})[j]_q\right),
\end{equation}
and, from \eqref{gf-AC}, the generating function 
\begin{align}
\mathcal L^{(\alpha)}(x;y;t\,|\,q)&:=\sum_{n\geq 0} \L^{(\alpha)}_n(x;y\,|\,q)\frac{t^n}{n!_q}\nonumber\\
&=
\frac{(t;q)_\infty\,(ytq^{\alpha+1};q)_\infty}{\prod_{k=0}^\infty \left[1-((1-q)x+y+1)tq^k+yt^2q^{2k}\right]},\label{GF}
\end{align}
which can be written as 
\begin{align}\label{key2.5}
\mathcal L^{(\alpha)}(x;y;t\,|\,q)=\mathcal L^{(\alpha)}(0;y;t\,|\,q)\cdot
\mathcal L^{(-1)}(x;y;t\,|\,q).
\end{align}

Define the ``vertical generating function''
 \begin{align}
 \mathcal L^{(\alpha)}_k(y;t\,|\,q):=[x^k]\mathcal L^{(\alpha)}(x;y;t\,|\,q)=\sum_{n\geq k}
 \ell^{(\alpha)}_{n,k}(y,q)\frac{t^n}{n!_q},
\end{align}
and  the $q$-derivative operator $\mathcal{D}_q$  
for  $f(t)\in \mathbf{R}[[t]]$  by 
$$
\mathcal{D}_q(f(t))=\frac{f(t)-f(qt)}{(1-q)t},
$$
where $\mathbf{R}=\C[[x,y,q,\ldots]]$.
Thus $\mathcal{D}_q(1)=0$ and $\mathcal{D}_q(t^n)=[n]_qt^{n-1}$ for $n>0$.

It follows from \eqref{GF} that 
\begin{align}\label{der-1}
\mathcal{D}_q\mathcal L^{(-1)}(x;y;t\,|\,q)=\frac{x}{(1-t)(1-yt)}\mathcal L^{(-1)}(x;y;t\,|\,q),
\end{align}
which in particular gives
\begin{align}
 \mathcal D_q\mathcal L_1^{(-1)}(y;t\,|\,q)&=[x]\mathcal{D}_q\mathcal L^{(-1)}(x;y;t\,|\,q)\nonumber\\
 &=\frac{1}{(1-t)(1-ty)}\nonumber\\
 &=\sum_{n\geq 0}n!_q\,[n+1]_y\,\frac{t^n}{n!_q}.\label{-1}
 \end{align}
So we can rewrite \eqref{der-1} as 
%
\begin{align}\label{prop1bis}
\mathcal D_q\mathcal L^{(-1)}(x;y;t\,|\,q)=x\cdot 
\mathcal D_q\mathcal L_1^{(-1)}(y;t\,|\,q)\cdot \mathcal L^{(-1)}(x;y;t\,|\,q),
\end{align}
which is equivalent to  the following result.
\begin{prop}\label{G}
For $n\in \N$, we have
\begin{align}\label{eq:a=-1}
\L_{n+1}^{(-1)}(x;y\,|\,q)=x\sum_{k=0}^n\begin{bmatrix}n\\ k\end{bmatrix}_qk!_q\,[k+1]_y\,\L_{n-k}^{(-1)}
(x;y\,|\,q).
\end{align}
\end{prop}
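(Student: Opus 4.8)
The plan is to extract the recurrence \eqref{eq:a=-1} directly from the functional equation \eqref{prop1bis}, which the excerpt has already derived from the generating function \eqref{GF}. The identity \eqref{prop1bis} is a statement about the formal power series $\mathcal{L}^{(-1)}(x;y;t|q)=\sum_{n\geq 0}L_n^{(-1)}(x;y|q)\,t^n/n!_q$, so the entire content of the proposition is to read off the coefficient of $t^{n+1}/(n+1)!_q$ on both sides. First I would recall that the $q$-derivative acts on the generating series by $\mathcal{D}_q\!\left(\sum_m a_m t^m/m!_q\right)=\sum_m a_m t^{m-1}/(m-1)!_q$, so that the left-hand side of \eqref{prop1bis} has the coefficient of $t^n/n!_q$ equal to $L_{n+1}^{(-1)}(x;y|q)$. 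This is exactly the left-hand side of \eqref{eq:a=-1}, which reduces the task to computing the coefficient extraction on the right.

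The main step is to interpret the product on the right of \eqref{prop1bis} as a $q$-analogue of a Cauchy product. From \eqref{-1} we know
\[
\mathcal{D}_q\mathcal{L}_1^{(-1)}(y;t|q)=\sum_{k\geq 0}k!_q[k+1]_y\,\frac{t^k}{k!_q}=\sum_{k\geq 0}[k+1]_y\,t^k,
\]
and $\mathcal{L}^{(-1)}(x;y;t|q)=\sum_{m\geq 0}L_m^{(-1)}(x;y|q)\,t^m/m!_q$. Multiplying these two exponential-type series and using the $q$-binomial convolution
\[
\frac{t^k}{k!_q}\cdot\frac{t^m}{m!_q}={k+m\brack k}_q\frac{t^{k+m}}{(k+m)!_q},
\]
I would collect the coefficient of $t^n/n!_q$ in the product $\mathcal{D}_q\mathcal{L}_1^{(-1)}\cdot\mathcal{L}^{(-1)}$. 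Setting $m=n-k$ and writing $k!_q[k+1]_y$ for the $k$-th coefficient of the first factor (so as to match the stated form with an explicit $k!_q$), the coefficient becomes $\sum_{k=0}^n{n\brack k}_q\,k!_q[k+1]_y\,L_{n-k}^{(-1)}(x;y|q)$. Multiplying through by the prefactor $x$ from \eqref{prop1bis} recovers precisely the right-hand side of \eqref{eq:a=-1}.

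The one point requiring care—and the likely main obstacle—is getting the bookkeeping of the $q$-factorials exactly right under the two competing normalizations: the series \eqref{-1} is normalized so that its $t^n$-coefficient is $[n+1]_y$, whereas the statement \eqref{eq:a=-1} displays the factor $k!_q[k+1]_y$ together with a $q$-binomial ${n\brack k}_q$. I would therefore make explicit that $k!_q[k+1]_y$ is the coefficient of $t^k/k!_q$ in $\mathcal{D}_q\mathcal{L}_1^{(-1)}(y;t|q)$, so that applying the $q$-Cauchy product to the normalized series $\sum_k (k!_q[k+1]_y)\,t^k/k!_q$ and $\sum_m L_m^{(-1)}\,t^m/m!_q$ produces exactly the $q$-binomial ${n\brack k}_q={n\brack n-k}_q$ as the convolution weight. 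Once this normalization is pinned down, extracting the coefficient of $t^n/n!_q$ is routine and yields \eqref{eq:a=-1}; equivalently, one checks that \eqref{prop1bis} and \eqref{eq:a=-1} are literally the same identity written in series form versus coefficient form, so no further estimate or induction is needed.
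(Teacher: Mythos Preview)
Your proposal is correct and follows exactly the paper's approach: the paper derives \eqref{prop1bis} from the generating function and then simply states that \eqref{prop1bis} ``is equivalent to'' Proposition~\ref{G}, leaving the coefficient extraction implicit. You have merely spelled out that extraction in detail, and your bookkeeping of the $q$-factorials and the $q$-Cauchy convolution is accurate.
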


Now,  applying  the $q$-binomial formula (see \cite[Chapter~1]{Gas:Rah04})
$$
\sum_{n\geq 0}\frac{(a;q)_n}{(q;q)_n}z^n=\frac{(az;q)_\infty}{(z;q)_\infty}
$$
with $a=q^{\alpha+1}$ and $z=yt$, we have 
\begin{align}\label{val.l0}
\mathcal L^{(\alpha)}(0;y;t\,|\,q)
=\frac{(ytq^{\alpha+1};q)_\infty}{(yt;q)_\infty}=
\sum_{n\geq 0} \bigg(\prod_{k=1}^n[\alpha+k]_q\bigg)\frac{(yt)^n}{n!_q}.
\end{align}
Substitution of the latter into  \eqref{key2.5} 
gives the following result.
\begin{prop} For $n\in \N$, we have 
\begin{align}\label{connection-1}
\L^{(\alpha)}_n(x;y\,|\,q)=\sum_{k=0}^n \begin{bmatrix}n\\ k\end{bmatrix}_q 
\bigg(\prod_{j=1}^k[\alpha+j]_q\bigg)
y^{k}\L^{(-1)}_{n-k}(x;y\,|\,q).
\end{align}
\end{prop}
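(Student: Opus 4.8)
The plan is to read off \eqref{connection-1} as the coefficient of $t^n/n!_q$ in the already-established factorization \eqref{key2.5}, after substituting the explicit series \eqref{val.l0} for the factor $\mathcal L^{(\alpha)}(0;y;t|q)$. The substantive analytic work has in fact already been done: \eqref{key2.5} provides the product structure and \eqref{val.l0} evaluates the first factor, so what remains is purely a coefficient-extraction in the $q$-exponential normalization. First I would record the elementary \emph{$q$-Cauchy product rule}: if $A(t)=\sum_{k\geq0}a_k\,t^k/k!_q$ and $B(t)=\sum_{m\geq0}b_m\,t^m/m!_q$ are two formal power series written in the $q$-exponential normalization, then
\begin{align*}
A(t)B(t)=\sum_{n\geq0}\Bigl(\sum_{k=0}^n {n\brack k}_q a_k\,b_{n-k}\Bigr)\frac{t^n}{n!_q}.
\end{align*}
This follows at once from $\frac{t^k}{k!_q}\cdot\frac{t^{n-k}}{(n-k)!_q}={n\brack k}_q\frac{t^n}{n!_q}$, i.e.\ from the identity $n!_q = k!_q\,(n-k)!_q\,{n\brack k}_q$.

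Next I would identify the two factors in \eqref{key2.5} as such series. By \eqref{val.l0} the first factor is
\begin{align*}
\mathcal L^{(\alpha)}(0;y;t|q)=\sum_{k\geq0}\Bigl(y^k\prod_{j=1}^k[\alpha+j]_q\Bigr)\frac{t^k}{k!_q},
\end{align*}
so its $k$-th $q$-exponential coefficient is $a_k=y^k\prod_{j=1}^k[\alpha+j]_q$; and by the definition of $\mathcal L^{(-1)}(x;y;t|q)$ the second factor has coefficients $b_m=L^{(-1)}_m(x;y|q)$. Applying the product rule to \eqref{key2.5} then shows that the coefficient of $t^n/n!_q$ on the left equals $\sum_{k=0}^n{n\brack k}_q\,y^k\prod_{j=1}^k[\alpha+j]_q\,L^{(-1)}_{n-k}(x;y|q)$. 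Since the coefficient of $t^n/n!_q$ in $\mathcal L^{(\alpha)}(x;y;t|q)$ is, by definition of the generating function, $L^{(\alpha)}_n(x;y|q)$, this is exactly \eqref{connection-1}.

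I do not expect a genuine obstacle here; the only point demanding care is the normalization. Because the generating functions are expanded with the $q$-factorial weights $t^n/n!_q$ rather than the ordinary $t^n/n!$, the convolution of coefficients produces the $q$-binomial ${n\brack k}_q$ in place of the classical $\binom{n}{k}$; matching the indices $k$ and $n-k$ correctly across the two factors is the whole of the bookkeeping. Everything else reduces to the two facts \eqref{key2.5} and \eqref{val.l0} already proved in the excerpt.
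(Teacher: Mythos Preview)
Your proposal is correct and follows essentially the same approach as the paper: the paper simply states that substituting \eqref{val.l0} into \eqref{key2.5} yields \eqref{connection-1}, which is precisely the coefficient-extraction argument you spell out. Your explicit mention of the $q$-Cauchy product rule makes transparent what the paper leaves to the reader.
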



 \begin{remark}
 \begin{enumerate}
 \item More generally we can prove 
  the following connection formula 
  for $\alpha\geq \beta\geq -1$:
\begin{align}\label{connectionbeta}
\L^{(\alpha)}_n(x;y\,|\,q)=\sum_{k=0}^n \begin{bmatrix}n\\ k\end{bmatrix}_q 
\bigg(\prod_{j=0}^{k-1}[\alpha-\beta+j]_q\bigg)
(yq^{\beta+1})^{k}\L^{(\beta)}_{n-k}(x;y\,|\,q).
\end{align}
\item  For $q\to 1$, Identity~\eqref{GF} reduces to
\begin{align*}
\sum_{n\geq 0}\L^{(\alpha)}_n(x;y\,|\,1)
 \frac{t^n}{n!}
 &=(1-yt)^{-(\alpha+1)}\left( 1- \frac{(1-y)t}{1-yt} \right)^{-x/(1-y)}.
\end{align*}
Comparing with the generating function of the Meixner polynomials (see~\cite[Equation~(1.9.11)]{KLK10})
$$
\sum_{n=0}^\infty \frac{(\beta)_n}{n!} M_n(x;\beta,c) t^n=(1-t)^{-x-\beta}(1-t/c)^x,
$$
we derive 
\begin{align*}
\L^{(\alpha)}_n(x;y\,|\,1)=y^n(\alpha+1)_n  M_n\left(\frac{-x}{1-y}; \alpha+1, y\right).
\end{align*}
Hence the $(q,y)$-Laguerre polynomials $\L^{(\alpha)}_n(x;y\,|\,q)$ are a $q$-analogue of rescaled Meixner polynomials.
\end{enumerate}
\end{remark}
\section{Combinatorial interpretation of $(q,y)$-Laguerre polynomials}
The reader is  referred  to \cite{GJ83, Fla:Seg09, BLL98} for  the general combinatorial theory of exponential 
generating functions for labeled structures. For our purpose we need only 
a $q$-version of this theory  for special labeled structures.
A \emph{labeled structure} on 
a (finite) set $A\subset \N$ is a graph with vertex set $A$.  
Consider a family of labeled $\F$-structures $\F=\bigcup_{n=0}^\infty \F_n$, where 
$\F_n$ consists of  the $\F$-structures on $[n]$.  
If $A=\{a_1, \ldots, a_n\}\subset \N$, where $a_1<\cdots <a_n$, 
an $\F$-structure  on $A$ is obtained by replacing $i$ by $a_i$ for $i=1, \ldots, n$ in the elements of $\F_n$.
Let $\mathcal{F}[A]$ denote the set of $\mathcal{F}$-structures on $A$ and 
associate a weight $u(f)$ to each object $f\in \F$.
 For the set of weighted  $\mathcal{F}$-structures  $\mathcal{F}_u$~(where the valuation $u$ may involve the parameter $q$), the $q$-generating function is defined as 
$$
\mathcal{F}_u(t)= \sum_{f\in\mathcal{F}}u(f)\frac{t^{|f|}}{|f|!_q},
$$
where $|f|=n$ if $f\in \mathcal{F}[n]$.
 If $\mathcal{F}_u$ and $\mathcal{G}_v$ are two         
 weighted  structures, we denote by 
 $(\mathcal{F}\cdot\mathcal{G})_w[n]$ the set of
   pairs $(f,g)\in\mathcal{F}[S]\times\mathcal{G}[T]$ with weight  
$$
w(f,g)=u(f)\cdot v(g)\cdot q^{\textrm{inv}(S,T)},
$$
where
   $(S,T)$ is an ordered bipartition of $[n]$ 
   and  $\textrm{inv}(S,T)$ is  the number of pairs $(i,j)\in S\times T$ such that $i>j$. Recall
(see \cite[p.~98]{GJ83}) that 
$$
\sum_{(S,T)}q^{\textrm{inv}(S,T)}=\begin{bmatrix}n\\ k\end{bmatrix}_q,
$$
where the sum is over all ordered bipartitions $(S,T)$ of $[n]$ with $|S|=k$.
It is folklore and immediately checked that
\begin{align}\label{folklore}
(\mathcal{F}\cdot\mathcal{G})_w(t)=\mathcal{F}_u(t)\cdot
\mathcal{G}_v(t).
\end{align}
We need some further definitions.
\begin{itemize}
\item[(a)] For a permutation $\sigma$ of a set $A\subset\mathbb{N}$, let the word $\hat{\sigma}$ denote its linear representation in the usual sense, i.e.,  $\hat{\sigma}=\sigma(i_1)\ldots \sigma(i_n)$ if $A=\{i_1, \ldots, i_n\}$ with $i_1<\cdots <i_n$.
\item[(b)]  
A list of~(nonnegative) integers, taken as a word over $\mathbb{N}$, is \textit{strict} if no element occurs more than once.
For a strict list $\rho$ let $\rl(\rho)$ be the number of elements that come after the maximum element.
\item[(c)]  
For a set $\lambda$ of $k$ non-empty and disjoint strict lists of integers, order these lists according to their minimum element~(increasing). This gives a list of $k$
words $(\lambda_1,\ldots,\lambda_k)$, which will be identified with $\lambda$. Then $\underline{\lambda}=\lambda_1\ldots\lambda_k$ denotes the concatenation of these lists.
\end{itemize}

Two particular structures  will be used to interpret the $(q,y)$-Laguerre polynomials.

\begin{itemize}
\item[(d)] The structures $\mathcal{S}^{(\alpha)}$  consist of permutations $\sigma$, where each cycle carries a color $\in\{0,1,2,\ldots,\alpha\}$. Write $\sigma$ as a product of \emph{unicolored permutations}, $\sigma=\sigma_0\cdot\sigma_1\cdots\sigma_{\alpha}$, where $\sigma_i$ is the product  of cycles with color $i$. Now  consider the concatenation
$$
\underline{\sigma}=\hat{\sigma}_0\cdot\hat{\sigma}_1\cdots
\hat{\sigma}_{\alpha}
$$
and the word with letters from $\{0, 1\}$ given by
$$
\underline{\underline{\sigma}}=0^{|\hat{\sigma}_0|}10^{|\hat{\sigma}_1|}1\cdots 10^{|\hat{\sigma}_\alpha|}.
$$
Define the valuation $u$ on $\mathcal{S}^{(\alpha)}$ by
$$
u(\sigma)=y^{|\underline{\sigma}|}q^{\inv(\underline{\sigma})+\inv(\underline{\underline{\sigma}})}.
$$
\item[(e)] The structures $\Lin^{(k)}$ consist of sets $\lambda=(\lambda_1,\ldots,\lambda_k)$ of $k$ nonempty and disjoint strict lists~(cf.~(c)). Define the valuation $v$ on $\Lin^{(k)}$ by
$$
v(\lambda)=y^{\rl(\lambda)}q^{\inv(\underline{\lambda})-\rl(\lambda)},
$$
where $\rl(\lambda)=\sum_{i=1}^k\rl(\lambda_i)$.
\end{itemize}

Let $ \lc^{(\alpha)}_{n,k}:=\mathcal{S}^{(\alpha)}\cdot \Lin^{(k)}[n]$. 
For  any $\alpha$-Laguerre configuration $(\sigma,\lambda)\in\mathcal{S}^{(\alpha)}[A]\times \Lin^{(k)}[B]$~with $A\cap B=\emptyset$ and $A\cup B=[n]$, in order to invoke the folklore statement~\eqref{folklore}, one should use as valuation 
\begin{align}
w(\sigma,\lambda)&=u(\lambda)\cdot v(\lambda)\cdot q^{\inv(A,B)}\nonumber\\
&=y^{|\underline{\sigma}|}q^{\inv(\underline{\sigma})+
\inv(\underline{\underline{\sigma}})}y^{\rl(\lambda)}q^{\inv(\underline{\lambda})-\rl(\lambda)}q^{\inv(A,B)}\nonumber\\
&=y^{|\underline{\sigma}|+\rl(\lambda)}q^{\inv(\underline{\sigma})+
\inv(\underline{\underline{\sigma}})+\inv(\underline{\lambda})
-\rl(\lambda)}q^{\inv(A,B)}\nonumber\\
&=y^{|\underline{\sigma}|+\rl(\lambda)}q^{\inv(\underline{\sigma}.\underline{\lambda})-\rl(\lambda)+\inv(\underline{\underline{\sigma}})}.\label{w-valuation}
\end{align}
The essential point is 
$\inv(\underline{\sigma})+\inv(\underline{\lambda})+\inv(A,B)=
\inv(\underline{\sigma}.\underline{\lambda})$.
This describes the weighted configurations $( \lc^{(\alpha)}_{n,k})_w$. 
An element of   $(\lc^{(\alpha)}_{n,k})_w$ is called an 
$\alpha$-Laguerre configuration  on $[n]$, see Figure~\ref{lagconf2}.
\begin{figure}[t]
\begin{tikzpicture}[scale=1.4]
\draw (0,0) rectangle (5.5,2.5);
\draw[line width=0.8,dashed] (3,2.5)--(3,0);
\draw[->] (0.5,1.5) to [out=-30, in=30] (0.55,0.52);
\draw[->] (0.5,0.5) to [out=120, in=210] (0.48,1.47);
\fill (0.5,1.5) circle (1.1pt);
\fill (0.5,0.5) circle (1.1pt);
\node[above] at (0.5,1.5) {$4$};
\node[below] at (0.5,0.5) {$7$};
\node[above,color=red] at (0.5,0.7){\bf{0}};

\draw[->] (1.5,1.5) to [out=-30, in=60] (1.7,1);
\draw[->] (1.5,0.5) to [out=120, in=210] (1.45,1.5);
\draw[->] (1.7,0.95) to [out=-120,in=60] (1.5,0.55);
\fill (1.5,1.5) circle (1.1pt);
\fill (1.5,0.5) circle (1.1pt);
\fill (1.7,0.95) circle (1.1pt);
\node[above] at (1.5,1.5) {$2$};
\node[below] at (1.5,0.5) {$5$};
\node[right] at (1.7,0.95) {$13$};
\node[above,color=red] at (1.5,0.7)   {\bf{1}};

\fill (2.5,1.5) circle (1.1pt);
\draw (2.5,1.5) to [out=-30,in=30] (2.5,1);
\draw [->](2.5,1) to [out=150,in=210] (2.48,1.45);
\node[above] at (2.5,1.5) {$14$};
\node[above,color=red] at (2.5,1) {\bf{1}};

\fill (1,2) circle (1.1pt);
\draw (1,2) to [out=-30,in=30] (1,1.5);
\draw [->](1,1.5) to [out=150,in=210] (0.98,1.95);
\node[above] at (1,2) {$15$};
\node[above,color=red] at (1,1.5) {\bf{0}};

\draw[->](3.6,1)--(3.6,0.55);
\draw[->](3.6,0.5)--(4.35,0.5);
\fill (3.6,1) circle (1.1pt);
\fill (3.6,0.5) circle (1.1pt);
\fill (4.4,0.5) circle (1.1pt);
\node[left] at (3.6,1){$12$};
\node[below] at (3.6, 0.5){$6$};
\node[right] at (4.4,0.5){$11$};

\draw[->] (3.6,1.4)--(4.3,1.4);
\fill (3.6,1.4) circle (1.1pt);
\fill (4.35,1.4) circle (1.1pt);
\node[left] at (3.6,1.4){$10$};
\node[right] at (4.4,1.4){$8$};

\draw[->] (3.6,2)--(4.3,2);
\fill (3.6,2) circle (1.1pt);
\fill (4.35,2) circle (1.1pt);
\node[left] at (3.6,2) {$1$};
\node[right] at (4.4,2) {$3$};

\fill (5,1.8) circle (1.1pt);
\node[right] at (5, 1.8){$9$};
\node[above] at (2,2.1){$\sigma$};
\node[above] at (3.5,2.1){$\lambda$};
\end{tikzpicture}
\caption{A $1$-Laguerre configuration  $(\sigma, \la)\in  \lc^{(1)}_{15,4}$, which is  the Laguerre configuration in Figure~\ref{lagconf1} of which each cycle gets a color $0$ or $1$.}
\label{lagconf2}
\end{figure}
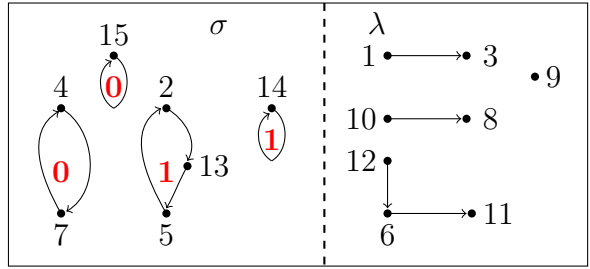

\begin{lem}\label{lemma1}
For $\alpha\in\mathbb{N}$, we have 
$$
\mathcal{S}_u^{(\alpha)}(t)=
\mathcal{L}^{(\alpha)}(0;y;t\,|\,q).
$$
\end{lem}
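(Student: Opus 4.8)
The plan is to realise $\mathcal{S}^{(\alpha)}$ as an iterated product of $\alpha+1$ \emph{unicolored} permutation structures and then to apply the folklore product rule \eqref{folklore}. For each color $i\in\{0,1,\dots,\alpha\}$ I introduce the structure $\mathcal{S}_i$ whose objects are ordinary permutations $\pi$ of a finite subset of $\N$ (regarded as entirely colored $i$), carrying the valuation
\[
u_i(\pi)=(yq^{i})^{|\pi|}\,q^{\inv(\hat\pi)}.
\]
Any $\sigma\in\mathcal{S}^{(\alpha)}[n]$ factors as $\sigma=\sigma_0\cdots\sigma_\alpha$, where $\sigma_i$ is the product of the color-$i$ cycles and lives on the block $A_i$ of an ordered set partition $(A_0,\dots,A_\alpha)$ of $[n]$. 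The first task is to check that the valuation $u$ on $\mathcal{S}^{(\alpha)}$ coincides with the product valuation $w$ obtained by iterating \eqref{folklore} on $\mathcal{S}_0\cdot\mathcal{S}_1\cdots\mathcal{S}_\alpha$.

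This rests on two bookkeeping identities. Because $\hat\sigma_i$ is a word whose set of letters is exactly $A_i$, the inversions of the concatenation split as
\[
\inv(\underline\sigma)=\sum_{i=0}^{\alpha}\inv(\hat\sigma_i)+\sum_{0\le i<j\le\alpha}\inv(A_i,A_j),
\]
the cross-block inversions between $\hat\sigma_i$ and $\hat\sigma_j$ being precisely the set-inversions $\inv(A_i,A_j)$; this is the same mechanism as the ``essential point'' invoked for \eqref{w-valuation}. Secondly, reading the binary word $\underline{\underline\sigma}=0^{|\hat\sigma_0|}1\cdots1\,0^{|\hat\sigma_\alpha|}$ gives $\inv(\underline{\underline\sigma})=\sum_{i=0}^\alpha i\,|A_i|$. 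Now the factor $q^{\sum_{i<j}\inv(A_i,A_j)}$ prescribed by \eqref{folklore} supplies the missing cross-block term, the per-color weights $q^{i|A_i|}$ from the $u_i$ assemble into $q^{\inv(\underline{\underline\sigma})}$, the remaining $q$-powers give $q^{\inv(\underline\sigma)}$, and $y^{\sum_i|A_i|}=y^{|\underline\sigma|}$; hence $w(\sigma_0,\dots,\sigma_\alpha)=u(\sigma)$, so $\mathcal{S}^{(\alpha)}_u=\mathcal{S}_0\cdots\mathcal{S}_\alpha$ as weighted structures.

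With this factorization, \eqref{folklore} yields $\mathcal{S}_u^{(\alpha)}(t)=\prod_{i=0}^\alpha \mathcal{S}_{u_i}^{(i)}(t)$, and each factor is elementary: using the Mahonian evaluation $\sum_{\pi}q^{\inv(\hat\pi)}=m!_q$ (sum over permutations $\pi$ of an $m$-element set),
\[
\mathcal{S}_{u_i}^{(i)}(t)=\sum_{m\ge0}(yq^{i})^{m}\Bigl(\sum_{\pi}q^{\inv(\hat\pi)}\Bigr)\frac{t^m}{m!_q}=\sum_{m\ge0}(yq^{i}t)^{m}=\frac{1}{1-yq^{i}t}.
\]
Therefore $\mathcal{S}_u^{(\alpha)}(t)=\prod_{i=0}^\alpha (1-yq^{i}t)^{-1}$, and a telescoping of $q$-shifted factorials gives $\prod_{i=0}^\alpha(1-yq^{i}t)^{-1}=\frac{(ytq^{\alpha+1};q)_\infty}{(yt;q)_\infty}$, which is exactly $\mathcal{L}^{(\alpha)}(0;y;t|q)$ by \eqref{val.l0}.

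The one step demanding genuine care is the weight matching of the second paragraph: one must confirm that the global color statistic $\inv(\underline{\underline\sigma})$ is recovered by distributing a factor $q^{i}$ to \emph{each element} of a color-$i$ cycle (not to each cycle), and that the concatenation statistic splits cleanly into within-block and cross-block parts with the cross part equal to $\inv(A_i,A_j)$. Once this is verified, the remaining $q$-factorial evaluation and the telescoping displayed above are routine.
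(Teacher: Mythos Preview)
Your argument is correct. The weight-matching in the second paragraph is sound: the identity $\inv(\underline{\underline\sigma})=\sum_{i=0}^\alpha i\,|A_i|$ follows because the $k$-th separator $1$ in $\underline{\underline\sigma}$ precedes exactly $|A_k|+\cdots+|A_\alpha|$ zeros, and summing over $k$ counts each $|A_j|$ exactly $j$ times; the splitting of $\inv(\underline\sigma)$ into within-block and cross-block parts is immediate since the letter set of $\hat\sigma_i$ is $A_i$. Iterating \eqref{folklore} to $\alpha+1$ factors is routine (the cross term $\inv(A_0\cup\cdots\cup A_{j-1},A_j)$ decomposes additively), so the product formula $\mathcal{S}^{(\alpha)}_u(t)=\prod_{i=0}^\alpha(1-yq^it)^{-1}$ and the telescoping to the infinite-product form of $\mathcal{L}^{(\alpha)}(0;y;t|q)$ are valid.

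Your route, however, differs from the paper's. The paper does not factor $\mathcal{S}^{(\alpha)}$ color by color; instead it builds a bijection $\mathcal{S}^{(\alpha)}[n]\to\mathcal{S}^{(0)}[n]\times\mathsf{P}(n,\alpha)$ (with $\mathsf{P}(n,\alpha)$ the set of $0/1$-words with $n$ zeros and $\alpha$ ones) sending $\sigma\mapsto(\tilde\sigma,\underline{\underline\sigma})$, where $\tilde\sigma$ is the permutation with linear word $\underline\sigma$. Summing the two $\inv$-contributions separately gives the coefficient $n!_q\,{n+\alpha\brack\alpha}_q=\prod_{i=1}^n[\alpha+i]_q$, which is then matched against the Taylor expansion \eqref{val.l0} (itself obtained via the $q$-binomial theorem). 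Your approach is more species-theoretic and lands directly on the closed product form, bypassing both the coefficient identity and the appeal to the $q$-binomial theorem; the paper's approach is shorter once \eqref{val.l0} is in hand and makes the link to lattice-path $q$-counting explicit. A minor notational slip: your symbol $\mathcal{S}^{(i)}_{u_i}(t)$ should simply be $(\mathcal{S}_i)_{u_i}(t)$, since the superscript $(i)$ has no structural meaning here.
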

\begin{proof}
Let $\mathsf{P}(n,\alpha)$ be the set of words of length $n+\alpha$ 
with $n$ $0$'s and $\alpha$ $1$'s, i.e., lattice paths from $(0,0)$ to $(n,\alpha)$.
For $\sigma\in\mathcal{S}^{(\alpha)}[n]$,
the word $\underline{\sigma}$ can be seen as the linear representation of an~(ordinary) permutation $\tilde{\sigma}\in\mathcal{S}^{(0)}[n]$, whereas 
$\underline{\underline{\sigma}}\in \mathsf{P}(n,\alpha)$. The mapping
\begin{align*}
\mathcal{S}^{(\alpha)}[n] &\rightarrow \mathcal{S}^{(0)}[n]\times \mathsf{P}(n,\alpha)\\ \sigma &\mapsto (\tilde{\sigma},\underline{\underline{\sigma}})
\end{align*}
is a bijection, and from summing both contributions separately, one obtains
\begin{align*}
\sum_{\sigma\in\mathcal{S}^{(\alpha)}[n]}q^{\inv(\underline{\sigma})+\inv(\underline{\underline{\sigma}})}&=
\sum_{\sigma\in\mathcal{S}^{(0)}[n]}q^{\inv(\underline{\sigma})}
\sum_{\underline{\underline{\sigma}}\in P(n,\alpha)}q^{\inv(\underline{\underline{\sigma}})}\\
&=n!_q\begin{bmatrix}n+\alpha\\\alpha\end{bmatrix}_q,
\end{align*}
which is $\prod_{i=1}^n[\alpha+i]_q$. So we get 
$$
\mathcal{S}_u^{(\alpha)}(t)=\sum_{n\geq 0}
\bigg(\prod_{i=1}^n[\alpha+i]_q\bigg)\frac{(yt)^n}{n!_q}.
$$
The result then follows from~\eqref{val.l0}.
\end{proof}
\begin{lem}\label{lemma2}
For integers $k\geq 1$, we have
$$
\Lin_v^{(k)}(t)=\mathcal{L}_k^{(-1)}(y;t\,|\,q).
$$
\end{lem}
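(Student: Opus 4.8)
The plan is to argue by induction on $k$, comparing the two sides after applying the operator $\mathcal{D}_q$. Extracting the coefficient of $x^k$ in~\eqref{der-1} and invoking~\eqref{-1} gives the analytic recurrence
\[
\mathcal{D}_q\mathcal{L}_k^{(-1)}(y;t|q)=\frac{1}{(1-t)(1-yt)}\,\mathcal{L}_{k-1}^{(-1)}(y;t|q)\qquad(k\geq1),
\]
with $\mathcal{L}_0^{(-1)}(y;t|q)=\mathcal{L}^{(-1)}(0;y;t|q)=1$ by~\eqref{val.l0}. Both $\Lin_v^{(k)}(t)$ and $\mathcal{L}_k^{(-1)}(y;t|q)$ are power series in $t$ whose lowest term is $t^k$, hence have no constant term for $k\geq1$; since $\mathcal{D}_q$ is injective on such series (its kernel is the constants, as $[n]_q\neq0$), it suffices to prove the matching combinatorial recurrence $\mathcal{D}_q\Lin_v^{(k)}(t)=\tfrac{1}{(1-t)(1-yt)}\,\Lin_v^{(k-1)}(t)$ for $k\geq1$, where $\Lin^{(0)}$ is the empty structure with $\Lin_v^{(0)}(t)=1$.

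First I would evaluate the single-list weight, which supplies the factor $\tfrac{1}{(1-t)(1-yt)}$ and the case $k=1$. For a strict list $w$ on $[n]$ (one of the $n!$ linear orderings of $[n]$) I delete the maximal element $n$: if $n$ has $\rl(w)=j$ elements to its right, then $n$ forms exactly $j$ inversions, so $\inv(w)=\inv(w')+j$ with $w'$ the induced strict list on $[n]\setminus\{n\}$. Thus the $\Lin^{(1)}$-weight of $w$ equals $y^{j}q^{\inv(w)-j}=y^{j}q^{\inv(w')}$, and $w\mapsto(w',j)$ is a bijection onto pairs (strict list on $[n-1]$, $j\in\{0,\dots,n-1\}$). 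Summing over $w$ gives $\sum_{w\in\Lin^{(1)}[n]}v(w)=[n]_y\,(n-1)!_q$ by the Mahonian identity $\sum_{w'}q^{\inv(w')}=(n-1)!_q$, whence $\mathcal{D}_q\Lin_v^{(1)}(t)=\sum_{m\geq0}[m+1]_y\,t^{m}=\tfrac{1}{(1-t)(1-yt)}$, settling $k=1$.

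For the inductive step I would peel off the \emph{first} block. Given $\lambda=(\lambda_1,\dots,\lambda_k)\in\Lin^{(k)}[n]$ ordered by minima, $\lambda_1$ is exactly the block containing the global minimum $1$; let $A$ be its support, $B=[n]\setminus A$, and $\lambda'=(\lambda_2,\dots,\lambda_k)\in\Lin^{(k-1)}[B]$. Deleting $\lambda_1$ preserves the relative order of the remaining minima, so $\underline{\lambda}=\lambda_1\,\underline{\lambda'}$, giving $\rl(\lambda)=\rl(\lambda_1)+\rl(\lambda')$ and $\inv(\underline{\lambda})=\inv(\lambda_1)+\inv(\underline{\lambda'})+\inv(A,B)$; hence $v(\lambda)$ factors as the $\Lin^{(1)}$-weight of $\lambda_1$ times the $\Lin^{(k-1)}$-weight of $\lambda'$ times $q^{\inv(A,B)}$. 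Because $1\in A$ contributes no inversion, the constrained sum over bipartitions with $|A|=a$ and $1\in A$ equals ${n-1\brack a-1}_q$, and combining this with the single-list evaluation yields
\[
\sum_{\lambda\in\Lin^{(k)}[n]}v(\lambda)=\sum_{a=1}^{n}{n-1\brack a-1}_q\,[a]_y\,(a-1)!_q\sum_{\lambda'\in\Lin^{(k-1)}[n-a]}v(\lambda').
\]
This is precisely the $q$-binomial convolution expressing $\mathcal{D}_q\Lin_v^{(k)}(t)=\tfrac{1}{(1-t)(1-yt)}\Lin_v^{(k-1)}(t)$ through the folklore product rule~\eqref{folklore}, which closes the induction.

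The main obstacle is the inversion bookkeeping in the last step together with the choice of the \emph{right} deletion. Peeling the minimum block keeps $\rl$ additive and converts the constraint $1\in A$ into the shifted $q$-binomial ${n-1\brack a-1}_q$ that realizes $\mathcal{D}_q$. By contrast, deleting the global maximum $n$—the naive reading of $\mathcal{D}_q$—would wreck the factorization, since erasing the largest entry of its block alters that block's $\rl$ value in an uncontrolled way. This is exactly why the single-list computation is isolated (there $\rl$ is not retained afterward) and the recursion is driven from the minimum side; the whole argument may equivalently be phrased as verifying the $q$-differential equation~\eqref{prop1bis} coefficientwise in $x$.
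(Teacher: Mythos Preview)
Your proposal is correct and follows essentially the same approach as the paper: induction on $k$, with the $k=1$ case handled by deleting the maximum element from a single list to obtain $\sum_{\lambda\in\Lin^{(1)}[n]}v(\lambda)=(n-1)!_q[n]_y$, and the inductive step handled by peeling off the block containing the global minimum and interpreting the constraint $1\in A$ as the combinatorial meaning of $\mathcal{D}_q$. The only cosmetic difference is that the paper writes the analytic recurrence as $\mathcal{D}_q\mathcal{L}^{(-1)}_{k+1}=\mathcal{D}_q\mathcal{L}^{(-1)}_1\cdot\mathcal{L}^{(-1)}_k$ (obtained by equating coefficients of $x^{k+1}$ in~\eqref{prop1bis}) rather than your equivalent form $\mathcal{D}_q\mathcal{L}_k^{(-1)}=\frac{1}{(1-t)(1-yt)}\,\mathcal{L}_{k-1}^{(-1)}$, and invokes the folklore product~\eqref{folklore} plus the ``tag the minimum'' reading of $\mathcal{D}_q$ where you spell out the restricted sum $\sum_{1\in A,\,|A|=a}q^{\inv(A,B)}={n-1\brack a-1}_q$ explicitly.
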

\begin{proof} We proceed by induction on $k\geq 1$.
\begin{itemize}
\item The case  $k=1$. 
For a single list $\lambda=\underline{\lambda}\in \Lin^{(1)}[n+1]$, let $j_{\lambda}$ be the position of the maximum element, let $\lambda'=\underline{\lambda}'\in \Lin^{(1)}[n]$ be the list obtained by deleting this maximum element. Then
\begin{align*}
\Lin^{(1)}[n+1] &\rightarrow \Lin^{(1)}[n]\times[n+1]\\
\lambda&\mapsto(\lambda',j_{\lambda})
\end{align*}
is a bijection such that 
$\inv(\underline{\lambda})=\inv(\underline{\lambda}')+
\rl(\lambda)$.
Furthermore, we have
\begin{align*}
\sum_{\lambda\in \Lin^{(1)}[n+1]}y^{\rl(\lambda)}q^{\inv(\underline{\lambda})-\rl(\lambda)}&=
\sum_{\lambda'\in \Lin^{(1)}[n]}q^{\inv(\underline{\lambda'})}\sum_{j\in[n+1]}
y^{n+1-j},
\end{align*}
and thus
$$
\sum_{\lambda\in \Lin^{(1)}[n+1]}v(\lambda)=n!_q\,[n+1]_{y},
$$
which, in view of \eqref{-1},  gives
$$
\mathcal{D}_q\Lin_v^{(1)}(t)=\mathcal{D}_q\mathcal{L}_1^{(-1)}(y;t\,|\,q),
$$
and by $q$-integration
$$
\Lin_v^{(1)}(t)=\mathcal{L}_1^{(-1)}(y;t\,|\,q)
$$
because the series on both sides have a zero constant term.

\item The case $k>1$.
Assuming that
$\Lin_v^{(k)}(t)=\mathcal{L}_k^{(-1)}(y;t\,|\,q)$ has already been proved for  $k\geq 1$, 
the goal is to show
$$
\Lin_v^{(k+1)}(t)=\mathcal{L}_{k+1}^{(-1)}(y;t\,|\,q).
$$
Comparing  the coefficients of  $x^{k+1}$ on both sides of Equation~\eqref{prop1bis},
we obtain
$$
\mathcal{D}_q\mathcal{L}^{(-1)}_{k+1}(y;t\,|\,q)=\mathcal{D}_q\mathcal{L}^{(-1)}_1(y;t\,|\,q)\cdot \mathcal{L}^{(-1)}_k(y;t\,|\,q).
$$
If we can show that similarly
\begin{align}\label{diffeq}
\mathcal{D}_q\Lin_v^{(k+1)}(t)=\mathcal{D}_q\Lin_v^{(1)}(t)\cdot \Lin_v^{(k)}(t),
\end{align}
then we would be done. Again, the final integration step poses no problem because in both $\Lin_v^{(k+1)}(t)$ and $\mathcal{L}_{k+1}^{(-1)}(y;t\,|\,q)$ the first $k+1$ coefficients vanish.
Recall that a configuration 
$\la\in \Lin^{(k+1)}[n]$  
consists of a list  of $k+1$ disjoint strict lists, written as a list $\lambda=(\lambda_0,\lambda_1,\ldots,\lambda_k)$,
with $\lambda_i\in \Lin^{(1)}[A_i]$, where
$$
\biguplus_{i=0}^kA_i=[n]\quad \textrm{and}\quad \min A_{i-1}<\min A_i,\quad 
1\leq i\leq k.
$$
We have  a bijection
\begin{align*}
\Lin^{(k+1)}[A] &\rightarrow \Lin^{(1)}[A_0]\times \Lin^{(k)}[A']\\
\lambda&\mapsto(\lambda_0,\lambda'),
\end{align*}
where $\lambda'=(\lambda_1,\dots,\lambda_k)$ and $A'=\bigcup_{i=1}^kA_i$, which also satisfies the requirement for applying the folklore statement~\eqref{folklore}:
$$
v(\lambda)=v(\lambda_0)\cdot v(\lambda')\cdot q^{\inv(A_0,A')}.
$$
All this holds only if for the bipartition $A=A_0\uplus A'$ it is guaranteed that $\min A_0<\min A'$. This is where the derivative $\mathcal{D}_q$ comes into play. Differentiation for a collection of structures means that the minimum element of the underlying set of a structure is tagged and no longer counted in the $w$-valuation of the base set. In the present situation, this implies that only structures are considered where tagging the minimum element of $\lambda$ means the same as tagging the minimum element of $\lambda_0$. This shows that \eqref{diffeq}
holds.\qedhere
\end{itemize}
\end{proof}

\begin{thm}\label{key}
For integers $\alpha\geq -1$,
we have
\begin{align*}
\ell_{n,k}^{(\alpha)}(y;q)
&=\sum_{(\sigma;\lambda)\in\lc_{n,k}^{(\alpha)}}y^{|\underline{\sigma}|+\rl(\lambda)}q^{\inv(\underline{\sigma}.\underline{\lambda})-\rl(\lambda)+\inv(\underline{\underline{\sigma}})}.
\end{align*}
\end{thm}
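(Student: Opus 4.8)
The plan is to obtain the theorem as a formal consequence of the factorization \eqref{key2.5}, the two lemmas immediately preceding it, and the folklore product rule \eqref{folklore}; no new combinatorial construction is needed, only the extraction of a single coefficient in the variable $x$. Throughout, recall that $\ell_{n,k}^{(\alpha)}(y;q)$ is by definition the coefficient of $t^n/n!_q$ in the vertical generating function $\mathcal{L}^{(\alpha)}_k(y;t|q)=[x^k]\mathcal{L}^{(\alpha)}(x;y;t|q)$, so it suffices to identify this generating function combinatorially.

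First I would extract the coefficient of $x^k$ from the factorization \eqref{key2.5}. Since the factor $\mathcal{L}^{(\alpha)}(0;y;t|q)$ is an evaluation at $x=0$ and hence carries no dependence on $x$, the operator $[x^k]$ passes through it and acts only on the second factor, giving
$$
\mathcal{L}^{(\alpha)}_k(y;t|q)=\mathcal{L}^{(\alpha)}(0;y;t|q)\cdot \mathcal{L}^{(-1)}_k(y;t|q).
$$
Next I would invoke the two lemmas: the first identifies $\mathcal{L}^{(\alpha)}(0;y;t|q)$ with the generating function $\mathcal{S}^{(\alpha)}_u(t)$ of the colored permutations, while the second identifies $\mathcal{L}^{(-1)}_k(y;t|q)$ with $\Lin^{(k)}_v(t)$. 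Substituting both yields $\mathcal{L}^{(\alpha)}_k(y;t|q)=\mathcal{S}^{(\alpha)}_u(t)\cdot \Lin^{(k)}_v(t)$, and the folklore statement \eqref{folklore} recognizes this product as the generating function $(\lc^{(\alpha)}_{n,k})_w(t)$ of the product structure equipped with the combined valuation $w$. Reading off the coefficient of $t^n/n!_q$ on both sides then gives $\ell_{n,k}^{(\alpha)}(y;q)=\sum_{(\sigma,\lambda)\in\lc^{(\alpha)}_{n,k}}w(\sigma,\lambda)$, and the explicit form of $w$ already computed in \eqref{w-valuation}, resting on the key inversion identity $\inv(\underline{\sigma})+\inv(\underline{\lambda})+\inv(A,B)=\inv(\underline{\sigma}.\underline{\lambda})$, is exactly the summand claimed in the theorem.

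The place where I would be most careful is the behaviour at the two ends of the range $\alpha\geq -1$, since the first lemma is phrased only for $\alpha\in\mathbb{N}$. For $\alpha=-1$ the color set $\{0,\ldots,\alpha\}$ is empty, so $\mathcal{S}^{(-1)}$ reduces to the single empty permutation with $|\underline{\sigma}|=0$, and $\lc^{(-1)}_{n,k}$ collapses to $\Lin^{(k)}[n]$; here the asserted identity is precisely the second lemma. For $\alpha=0$ the single available color makes $\mathcal{S}^{(0)}$ the family of ordinary permutations, and the computation inside the first lemma goes through verbatim (its lattice-path factor $\mathsf{P}(n,0)$ degenerating to a single point, so that $\mathcal{L}^{(0)}(0;y;t|q)=\sum_{n\geq 0}n!_q(yt)^n/n!_q$ as in \eqref{val.l0}). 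Beyond these boundary checks the argument is pure assembly: the genuine combinatorial work—the inversion bookkeeping of \eqref{w-valuation} and the two bijections established in the lemmas—has already been done, so I do not anticipate a serious obstacle at the level of the theorem itself.
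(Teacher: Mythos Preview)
Your proposal is correct and follows essentially the same route as the paper: extract the coefficient of $x^k$ from the factorization \eqref{key2.5} to obtain $\mathcal{L}^{(\alpha)}_k(y;t|q)=\mathcal{L}^{(\alpha)}(0;y;t|q)\cdot \mathcal{L}^{(-1)}_k(y;t|q)$, then invoke the two lemmas together with the product rule \eqref{folklore} and the valuation computation \eqref{w-valuation}. Your explicit treatment of the boundary cases $\alpha=-1$ and $\alpha=0$ is a worthwhile addition, since Lemma~1 is only stated for $\alpha\in\mathbb{N}$ in the paper.
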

\begin{proof}
From \eqref{key2.5} we infer
$$
\mathcal L_k^{(\alpha)}(y;t\,|\,q)=\mathcal L^{(\alpha)}(0;y;t\,|\,q)\mathcal L_k^{(-1)}(y;t\,|\,q),
$$
and the result  follows from Lemmas~\ref{lemma1} and~\ref{lemma2}. 
\end{proof}

Here we give an example to illustrate the $\alpha$-Laguerre configurations.
\begin{example}\label{example1}
Consider  the $1$-Laguerre configuration  
$(\sigma; \la)\in \mathcal{LC}^{(1)}_{15,4}$ in Figure~\ref{lagconf2}.
We have 
\begin{align*}
\sigma&=\sigma_0\cdot \sigma_1\quad \text{with}\quad \sigma_0=(15)(7\,4),\;\sigma_1=(14)(13\,5\,2);\\
\lambda&=(\lambda_1,\lambda_2,\lambda_3,\lambda_4)\quad \text{with}\quad
\lambda_1=1\,3,\;\lambda_2=12\,6\,11,\; \lambda_3=10\,8,\;\lambda_4=9.
\end{align*}
Thus, 
\begin{align*}
&\underline{\sigma}=\hat\sigma_0\cdot \hat\sigma_1=7\,4\,15\cdot13\,2\,5\,14,\\
&\underline{\underline{\sigma}}=0^3\,1\,0^4;\\
&\underline{\la}=1\,3\cdot 12\,6\,11\cdot 10\,8\cdot 9.
\end{align*}
We have $|\underline{\sigma}|=7$, $\rl(\la)=3$, $\inv(\underline{\underline{\sigma}})=4$, 
and $\inv(\underline{\sigma}\cdot \underline{\la})=52$.
\end{example}
\begin{remark} Our model of $\alpha$-Laguerre configurations 
is simpler than the model in~\cite{CJK19}.  
Actually, the $\alpha$-Laguerre configurations are essentially 
the Laguerre configurations of which each 
cycle has a color in $\{0, \ldots, \alpha\}$. 
A linear order of paths and colored cycles is needed
only  for  the valuation $w$ in \eqref{w-valuation}.
\end{remark}

\section{Moments of $(q,y)$-Laguerre polynomials}
For $\alpha\in \N_0$,  by \eqref{def:mom} the moments of
the $(q,y)$-Laguerre polynomials are defined by
\begin{align}
\mu_n^{(\alpha)}(q,y):={\mathcal L}_{q}(x^n).
\end{align}

According to the theory of orthogonal polynomials (see \cite{Chi78}) and 
the three-term recurrence relation~\eqref{eq:recurrqlaguerre},
we have the orthogonality  relation
\begin{align}\label{eq:orthog}
{\mathcal L}_q(L^{(\alpha)}_{n}(x;y\,|\,q)L^{(\alpha)}_{m}(x;y\,|\,q))=
y^{n}n!_q\bigg(\prod_{j=1}^n[\alpha+j]_q\bigg)\delta_{n\,m}.
\end{align}
Moreover, we have 
 the  following continued fraction
expansion:
\begin{equation}\label{eq:co}
\sum_{n\geq
0}\mu_n^{(\alpha)}(q,y)t^n=\frac{1}{1-b_0t-\displaystyle
\frac{\lambda_1 t^2}{1-b_1t-\displaystyle\frac{\lambda_2
t^2}{\ddots}}},
\end{equation}
where $b_n=y[n+\alpha+1]_q+[n]_q$ and $\lambda_n=y[n]_q[n+\alpha]_q$.

 Let $\S_n$ be  the set of permutations of $\{1,2,\dots,n\}$. 
  For  $\sigma\in \S_n$, 
  we define three statistics, namely:
  \begin{itemize}
 \item the  number of \emph{weak excedances}, $\wex(\sigma)$, given by
 \begin{align*}
 \wex(\sigma)=|\{i\in [n]: \sigma(i)\geq i\}|;
 \end{align*}
\item the  number of \emph{records} (or \emph{left-to-right maxima}), $\rec(\sigma)$, given by
\begin{align*}
 \rec(\sigma)=|\{i\in [n]: \sigma(i)>\sigma(j) \;\textrm{for all}\;  j<i\}|;
  \end{align*}
 \item the \emph{number of crossings}, $\cros(\sigma)$, given by 
 \begin{align*}
\cros(\sigma)=|\{(i,j)\in [n]\times [n]: i<j\leq \sigma(i)<\sigma(j)\;\textrm{or}\; 
 \sigma(j)<\sigma(i)<j<i \}|.
\end{align*}
\end{itemize}

\begin{thm}\label{mom} Let $\beta=[\alpha+1]_q$. Then
\begin{align}\label{mom2}
\mu_n^{(\alpha)}(y,q)=\sum_{\sigma\in
\S_n}\beta^{\rec(\sigma)}y^{\wex(\sigma)}q^{\cros(\sigma)}. 
\end{align}
\end{thm}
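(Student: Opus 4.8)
The plan is to show that the right-hand side of \eqref{mom2} has the same Jacobi continued fraction \eqref{eq:co} as the moment generating function, and then to appeal to Flajolet's combinatorial theory of continued fractions. By the general theory of orthogonal polynomials together with the recurrence \eqref{eq:recurrqlaguerre}, the moments $\mu_n^{(\alpha)}$ are given by \eqref{eq:co} with $b_h=y[h+\alpha+1]_q+[h]_q$ and $\lambda_h=y[h]_q[h+\alpha]_q$; Flajolet's theorem then identifies $\mu_n^{(\alpha)}$ with the generating polynomial of weighted Motzkin paths of length $n$, in which a level step at height $h$ is weighted $b_h$, an up step $1$, and a down step from height $h$ weighted $\lambda_h$. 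It therefore suffices to realise $\sum_{\sigma\in\S_n}\beta^{\rec(\sigma)}y^{\wex(\sigma)}q^{\cros(\sigma)}$ as this same weighted path sum.

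To do so I would use the Foata--Zeilberger bijection $\Phi$ between $\S_n$ and the set of labelled Motzkin paths (Laguerre histories). Scanning $i=1,\dots,n$ and recording the cyclic type of $i$, a cycle valley ($\sigma^{-1}(i)>i$ and $\sigma(i)>i$) produces an up step, a cycle peak ($\sigma^{-1}(i)<i$ and $\sigma(i)<i$) a down step, and a cyclic double ascent, a cyclic double descent, or a fixed point a level step, the current height recording the number of open arcs. Since $\wex(\sigma)$ counts exactly the valleys, double ascents and fixed points, it equals the number of up steps plus the number of \emph{excedance-type} level steps, so $y^{\wex}$ is recovered by charging $y$ to each up step and each excedance-type level step. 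The statistic $\cros(\sigma)$ becomes the sum of the step labels, whose admissible ranges depend only on the current height; summing $q^{\cros}$ over each fibre of $\Phi$ in the case $\alpha=0$ (so $\beta=1$) returns $y[h+1]_q+[h]_q$ for level steps and $y[h]_q^2$ for up--down pairs.

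The role of the record weight $\beta=[\alpha+1]_q$ is the crux. First one checks that $\sigma(i)<i$ forces $i$ not to be a left-to-right maximum, so that every record is a weak excedance and hence corresponds to a valley or an excedance-type level step; records never occur at peaks or deficiency level steps. The key lemma to establish is that, under $\Phi$, position $i$ is a record if and only if its step carries the \emph{maximal} admissible label, i.e. the newly opened arc lies above all currently open arcs. Granting this, weighting each record by $\beta$ replaces the top label weight $q^{h}$ by $q^{h}\beta$ at an excedance-type level step, turning $[h+1]_q=[h]_q+q^{h}$ into $[h]_q+q^{h}[\alpha+1]_q=[h+\alpha+1]_q$, and replaces $q^{h-1}$ by $q^{h-1}\beta$ at a valley, turning the up-label factor $[h]_q$ into $[h-1]_q+q^{h-1}[\alpha+1]_q=[h+\alpha]_q$, while the paired down step still contributes $[h]_q$. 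This upgrades the weights to $b_h=y[h+\alpha+1]_q+[h]_q$ and $\lambda_h=y[h]_q[h+\alpha]_q$, matching \eqref{eq:co} and completing the proof. The main obstacle is precisely the verification of this record $\leftrightarrow$ maximal-label correspondence, together with the careful bookkeeping of the label ranges so that the $q$-integers assemble correctly; everything else is a routine summation over the fibres of $\Phi$.
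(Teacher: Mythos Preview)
Your outline is correct and follows essentially the same route as the paper: a Foata--Zeilberger/Biane type bijection from $\S_n$ to labelled lattice paths in which $\rec$ is read off as the number of maximal labels, followed by summation over fibres to recover the continued-fraction coefficients. The only noteworthy difference is that the paper works at the Dyck-path level with the S-fraction \eqref{eq:co-stieltjes} and detects records through the label $\xi_{2i}$, which encodes the rank of $\sigma^{-1}(i)$ among the open top vertices and is maximal exactly when $i$ is a record \emph{value}; contracted to Motzkin paths this puts the $\beta$-weight on cycle peaks and excedance-type level steps. Your scheme instead places the $\beta$-carrying label on cycle valleys and excedance-type level steps, encoding the rank of $\sigma(i)$ among the currently open outgoing arcs and flagging record \emph{positions}. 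Both placements are legitimate and yield the same $b_h,\lambda_h$, but yours requires a variant of the bijection in which up steps already carry a label (the standard FZ map assigns labels only when arcs close), so the ``record $\leftrightarrow$ maximal label'' lemma and the matching of $\cros$ with the label sum have to be checked for that specific variant rather than quoted.
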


The first values of the moments  are as follows:
\begin{align*}
\mu_1^{(\alpha)}(y,q)&=y\beta,\\
\mu_2^{(\alpha)}(y,q)&=y\beta+y^2\beta^2,\\
\mu_3^{(\alpha)}(y,q)&=y\beta+\beta(1+(2+q)\beta)y^2+y^3\beta^3.
\end{align*}

Due to the contraction formula~\cite[p.~292]{GJ83}, we can rewrite \eqref{eq:co} as
\begin{equation}\label{eq:co-stieltjes}
\sum_{n\geq
0}\mu_n^{(\alpha)}(q,y)t^n=\frac{1}{1-\displaystyle
\frac{\gamma_1 t}{1-\displaystyle\frac{\gamma_2
t}{\ddots}}},
\end{equation}
where $\gamma_{2n}=[n]_q$ and 
$\gamma_{2n+1}=y[n+\alpha]_q=y([n]_q+[\alpha+1]_q q^n)$ for $n\geq 0$.


Recall that 
a \emph{Dyck path} of length $2n$ is a sequence of points $(\omega_0, \ldots, \omega_{2n})$ in $\N_0\times \N_0$ 
satisfying $\omega_0=(0,0), \;\omega_{2n}=(2n,0)$ and 
$\omega_{i+1}-\omega_i=(1, 1)\;\textrm{or}\; (1,-1)$ for $i=0, \ldots, 2n-1$. Clearly we can also  identify a Dyck path with its sequence of steps (or \emph{Dyck word})
$s=s_1\ldots s_{2n}$ on the alphabet  $\{\mathsf{u}, \mathsf{d}\}$, and we use $|s|_{\mathsf{u}}$ and $|s|_{\mathsf{d}}$ to denote the number of $\mathsf{u}$'s and $\mathsf{d}$'s, respectively, in $s$.
So, for a Dyck word $s$, we have $|s|_{\mathsf{u}}= |s|_{\mathsf{d}}=n$ and  
$|s_1\ldots s_k|_{u}\geq |s_1\ldots s_k|_{\mathsf{d}}$ for 
$k\in [2n]$. The height $h_k$ of step $s_k$ is defined to be $h_1=0$ and 
$$
h_k=|s_1\ldots s_{k-1}|_u-|s_1\ldots s_{k-1}|_d \quad\textrm{for}\quad  k=2,\ldots, 2n.
$$
A  \emph{Laguerre history} of length $2n$ is a pair $(s, \xi)$, where 
$s$ is a Dyck word of length $2n$ and 
$\xi=(\xi_1, \ldots, \xi_{2n})$ is a sequence of integers such that 
$\xi_i=1$ if $s_i=\mathsf{u}$ and $1\leq \xi_i\leq \lceil{h_i/2\rceil}$ if $s_i=\mathsf{d}$.
Let $\mathcal{LH}_n$ be the set of  Laguerre histories of length $2n$. We essentially   use  Biane's  bijection~\cite{Bi93} to 
construct a bijection $\Phi$ from $\S_n$ to $\mathcal{LH}_n$. 

\begin{proof}[Proof of Theorem~\ref{mom}] 
We identify  a permutation $\sigma\in \S_n$ with the   bipartite graph $\mathcal G$ on 
$\{1, \ldots, n; 
1', \ldots, n'\}
$ with  an edge  $(i,j')$ 
if and only if $\sigma(i)=j$. 
We display the vertices on  two rows called top row and bottom row  as follows:
$$
\begin{pmatrix}
1&2&\cdots&n\\
1'&2'&\cdots &n'
\end{pmatrix},
$$
and we read the graph column by column from left to right and from top to bottom.
In other words,  the order of vertices   is $v_1=1, v_2=1',  \ldots, v_{2n-1}=n,
v_{2n}= n'$. 

For $k=1, \ldots, 2n$,  the $k$-th restriction  
of $\mathcal G$ is  the graph  $\mathcal G_k$ on $\{v_1, v_2, \ldots, v_k\}$
with edge $(v_i, v_j)$ in $\mathcal G_k$  if and only if $i,j\in [k]$, so  isolated
vertices may exist in $\mathcal G_k$. 

For $i=1, \ldots, n$, the Dyck path   $s=s_1\ldots s_{2n}$ is defined as follows:
\begin{itemize}
\item if $\sigma^{-1}(i)>i<\sigma(i)$ (i.e., $i$ is a cycle valley), 
then  $s_{2i-1}s_{2i}=\mathsf{uu}$;
\item if  $\sigma^{-1}(i)< i<\sigma(i)$ 
(i.e., $i$ is a cycle double ascent), then 
$s_{2i-1}s_{2i}=\mathsf{ud}$;
\item if $\sigma^{-1}(i)>i>\sigma(i)$ (i.e., $i$ is a cycle double descent),
 then  $s_{2i-1}s_{2i}=\mathsf{du}$;
\item if $\sigma^{-1}(i)<i>\sigma(i)$ (i.e., $i$ is a cycle peak), then 
$s_{2i-1}s_{2i}=\mathsf{dd}$;
\item if  $\sigma^{-1}(i)= i=\sigma(i)$ 
(i.e., $i$ is a fixed point), then 
$s_{2i-1}s_{2i}=\mathsf{ud}$.
\end{itemize}
It is easy to see that 
\begin{itemize}
\item $s$ is a Dyck path; 
\item the height $h_i$ is the number of isolated vertices in $\mathcal G_{i-1}$
for $i\in [2n]$ with $\mathcal G_{i-1}=\emptyset$;
thus $h_{2i-1}$ (respectively $h_{2i}$) is even (respectively odd)  for $i=1, \ldots, n$ and 
 there are $\lceil{h_i/2}\rceil$ isolated vertices in the top row. 
 \end{itemize}
Next, the sequence $\xi=(\xi_1, \ldots, \xi_{2n})$ is defined as follows:
\begin{itemize}
\item $s_i=\mathsf{u}$ then $\xi_i=1$;
\item $s_i=\mathsf{d}$, then
\begin{itemize}
\item if $\sigma(i)<i$ (i.e., $i$ is a cycle double descent or cycle peak),
then  $h_{2i-1}>0$; let $\xi_i=m$ if $\sigma(2i)$ is the 
$m$-th isolated vertex in the bottom row of 
$\mathcal G_{2i-2}$ from right-to-left ($1\leq m\leq  \lceil {h_{2i}/2}\rceil$); clearly the value  $i$ will 
contribute $m-1$ crossings  $l<k<i<j$ such that
$l=\sigma(i),\;k=\sigma(j)$;
\item if  $\sigma^{(-1)}(i)\leq i$ (i.e.,
$i$ is a cycle double ascent, cycle peak or fixed point), then 
 $h_{2i}>0$; let $\xi_i=m$ if $\sigma^{(-1)}(i)$ is the $m$-th
 isolated vertex in the top row of $\mathcal G_{2i-2}$ from right-to-left, so 
 $1\leq m\leq \lceil {h_{2i}/2}\rceil$; 
 clearly  the value $i$ will 
contribute $m-1$ crossings  $l<k<i<j$ such that
$l=\sigma^{-1)}(i),\;k=\sigma(j)$, and  $i$
is a record if and only if $m=\lceil {h_{2i}/2}\rceil$.
\end{itemize} 
\end{itemize}
Let $\Phi(\sigma)=(s,\xi)$. Then 
\begin{align*}
\wex(\sigma)&=|\{i\in [n]: s_{2i}=\mathsf{d}\}|, \\
\rec(\sigma)&=|\{i\in [n]: s_{2i}=\mathsf{d}, \, \xi_{2i}=\lceil{h_{2i}/2}\rceil\}|,\\
\cros(\sigma)&=\sum_{i: s_i=\mathsf{d}} (\xi_i-1).
\end{align*}
Therefore, 

\begin{align}
\sum_{\sigma\in
\S_n}\beta^{\rec(\sigma)}y^{\wex(\sigma)}q^{\cros(\sigma)}
&=\sum_{(s,\xi)\in \mathcal{LH}_n} 
\prod_{i: s_i=\mathsf{d}}q^{\xi_i-1}
\prod_{i: s_{2i}=\mathsf{d}}y \beta^{\chi(\xi_{2i}=\lceil{h_{2i}/2}\rceil)}\nonumber\\
&=\sum_{s\in \mathsf{Dyck}_n }\prod_{i: s_i=\mathsf{d}} w(s_i),\label{gf-S}
\end{align}
where $\mathsf{Dyck}_n$ denotes the set of Dyck paths of semilength $n$, and 
the weight of each down step $s_i=\mathsf{d}$ is defined  by
$$
w(s_i)=\begin{cases}
1+q+\cdots +q^{k-1},& \textrm{if  }  h_i=2k,\\ 
y(1+q+\cdots +q^{k-1}+\beta q^k), & \textrm{if } \; h_i=2k+1.
\end{cases}
$$
A folklore theorem~\cite{Fla80} implies that the generating function 
of \eqref{gf-S}  has 
the continued fraction expansion \eqref{eq:co-stieltjes}, and 
 we are done.
\end{proof}

\begin{example}
If $\sigma=4\, 1\,2\, 7\,9\, 6\,5\,8\,3\in \S_9$,  then the Laguerre history $\Phi(\sigma)=(s,\xi)$ is given by
$$
\begin{pmatrix}
s\\
\xi
\end{pmatrix}
=
\begin{pmatrix} 
\mathsf{u\,u}&\mathsf{d\,u}&\mathsf{d\,u}&\mathsf{u\,d}&\mathsf{u\,u}&\mathsf{u\,d}&\mathsf{d\,d}&\mathsf{u\,d}&\mathsf{d\,d}\\
1\,1        &1 \,1      &1        \,1       &1  \,2       &1        \,1      &1       \,1        &1      \,2       &1       \,1       &1\,1 
\end{pmatrix}.
$$
\end{example}



\begin{thm}\label{lin3}  
Let $\alpha\in \N_0$. 
 For nonnegative integers  $n_{1},\ldots, n_{k}$, 
the linearization coefficient 
\begin{align}\label{eq:lin3}
{{\mathcal L}}_q\left(\prod_{k=1}^mL^{(\alpha)}_{n_k}(x;y\,|\,q)\right)\quad \end{align}
is a polynomial in $\N[y, q]$.
\end{thm}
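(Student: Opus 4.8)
The plan is to turn the linearization coefficient into a signed generating function over explicit combinatorial objects and then kill the signs by an involution. Writing $N=\sum_k n_k$, the signless polynomials of \eqref{lag-combinatrorial} give $L_{n_k}^{(\alpha)}(x;y;q)=\sum_{j}(-1)^{n_k-j}\ell^{(\alpha)}_{n_k,j}(y;q)\,x^{j}$, where each $\ell^{(\alpha)}_{n_k,j}(y;q)\in\N[y,q]$ by Theorem~\ref{key}. Since $\mathcal{L}_q$ is linear and $\mathcal{L}_q(x^{J})=\mu_{J}^{(\alpha)}(y,q)$, expanding the product yields
\[
\mathcal{L}_q\left(\prod_{k=1}^m L_{n_k}^{(\alpha)}(x;y;q)\right)
=\sum_{j_1,\dots,j_m\ge 0}(-1)^{N-J}\left(\prod_{k=1}^m \ell^{(\alpha)}_{n_k,j_k}(y;q)\right)\mu_{J}^{(\alpha)}(y,q),
\qquad J=\sum_k j_k .
\]
By Theorem~\ref{key} and Theorem~\ref{mom} every factor on the right is already a nonnegative $(y,q)$-enumerator of explicit objects; the sole obstruction to membership in $\N[y,q]$ is the global sign $(-1)^{N-J}$. (As a sanity check, at $y=q=1$ this must collapse to the manifestly positive classical formula \eqref{eq:lin}, which guides the choice of model.)

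First I would assemble these ingredients into one signed family. To a choice $(j_1,\dots,j_m)$ I attach an $m$-tuple of $\alpha$-Laguerre configurations $(\sigma^{(k)},\lambda^{(k)})\in\lc^{(\alpha)}_{n_k,j_k}$ produced by $\ell^{(\alpha)}_{n_k,j_k}$, together with a permutation $\pi\in\S_{J}$ recording $\mu_{J}^{(\alpha)}$ through Theorem~\ref{mom}. The total sign is $(-1)^{N-J}=(-1)^{\sum_k(n_k-j_k)}$, and $n_k-j_k=|A^{(k)}|$ is exactly the number of edges (domain elements) of the $k$-th configuration; so the sign is $(-1)$ to the total number of edges carried by the tuple. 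The objective is to reorganize this family, in the spirit of the inter-block matching/rook model underlying \eqref{eq:lin}, so that an object survives precisely when no feature stays inside a single factor.

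The decisive step is a sign-reversing, $(y,q)$-weight-preserving involution $\Theta$ cancelling all negatively signed objects. I would define $\Theta$ by locating a canonical (say leftmost eligible) point that can be moved between two rôles: being an edge endpoint inside some configuration $(\sigma^{(k)},\lambda^{(k)})$ and being an ordinary point of the moment permutation $\pi\in\S_{J}$. Each toggle changes the total edge count by one, hence $N-J$ by one, flipping the sign, while the insertion/deletion position is prescribed so that the $y$-exponent (matching $\wex$ against $|\underline{\sigma}|+\rl(\lambda)$) and the $q$-exponent (matching $\cros$ against $\inv$ on the configuration side) are both unchanged. The fixed points of $\Theta$ are exactly the tuples admitting no such toggle; these all have sign $+1$ and $\N[y,q]$-weight, so summing over them exhibits the coefficient as a nonnegative element of $\N[y,q]$.

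The hard part will be statistic compatibility. Theorem~\ref{mom} records the $q$-weight of the moment as $\cros$ via the Biane-type history $\Phi$ feeding \eqref{eq:co-stieltjes}, whereas Theorem~\ref{key} records the polynomial $q$-weight as $\inv(\underline{\sigma}.\underline{\lambda})$; these live on different encodings, so $\Theta$ must be constructed on their common refinement—the weighted Laguerre-history model behind the Stieltjes continued fraction \eqref{eq:co-stieltjes}—rather than on permutations and configurations separately. Verifying that moving one point there preserves $\cros$ and the $y$-degree simultaneously, and that it is a genuine involution with the claimed fixed points, is the crux; the final positivity is then immediate.
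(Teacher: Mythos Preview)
Your approach is natural but does not match the paper's, and more importantly it has a genuine gap.

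The paper's proof is short and non-combinatorial: it first observes that by the orthogonality relation \eqref{eq:orthog} it suffices to treat $m=3$, and then it quotes an explicit closed formula \eqref{linerization3} for $\mathcal{L}_q\bigl(L^{(\alpha)}_{n_1}L^{(\alpha)}_{n_2}L^{(\alpha)}_{n_3}\bigr)$ derived from \cite[Theorem~1]{KSZ11}. That formula is a finite sum whose terms are products of $q$-multinomial coefficients, $q$-binomials, and explicit powers of $y$ and $q$, all of which lie in $\N[y,q]$, so nonnegativity is read off directly. No involution is needed.

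Your plan, by contrast, is the sign-reversing involution strategy in the Foata--Zeilberger/Zeng tradition. The issue is not that the strategy is unreasonable, but that you have not actually constructed $\Theta$: you only describe what it should do (toggle an edge of some $(\sigma^{(k)},\lambda^{(k)})$ against a point of $\pi$) and then acknowledge that the ``hard part'' is arranging for $\cros$ on the moment side and $\inv(\underline{\sigma}.\underline{\lambda})-\rl(\lambda)+\inv(\underline{\underline{\sigma}})$ on the polynomial side to match under the toggle, together with the $y$-degree. That is precisely the whole problem, not a routine check. Indeed, immediately after this theorem the paper explicitly poses as an \emph{open problem} the task of finding a combinatorial interpretation of \eqref{eq:lin3} for $\alpha\in\N_0$ that unifies the known $\alpha=0$ case of \cite{KSZ11} and the $q=1$ case of \cite{Zeng92}. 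So the step you flag as ``the crux'' is, as far as the paper is concerned, unsolved; without it your proposal is a program, not a proof.
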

\begin{proof}  In view of the orthogonality~\eqref{eq:orthog}, it suffices to prove the $m=3$ case. 
Indeed, we can derive  the following explicit formula from 
 \cite[Theorem~1]{KSZ11}:
 \begin{multline}\label{linerization3}
{\mathcal L}_q(L^{(\alpha)}_{n_1}(x;y\,|\,q)L^{(\alpha)}_{n_2}(x;y\,|\,q)L^{(\alpha)}_{n_3}(x;y\,|\,q))\\
=n_1!_q\,
n_2!_q\,n_3!_q\sum_{s\geq \max(n_1, n_2, n_3)}
y^s\,\begin{bmatrix}s\\ n_1+n_2+n_3-2s,s-n_3, s-n_2, s-n_1\end{bmatrix}_q\\
\times \begin{bmatrix}\alpha+s\\ s\end{bmatrix}_q\,\sum_{k\geq 0}\begin{bmatrix}n_1+n_2+n_3-2s\\ k\end{bmatrix}_q
y^{k}q^{\binom{k+1} {2}+ \binom{n_1+n_2+n_3-2s-k}{ 2}+k\alpha},
\end{multline}
where the $q$-multinomial coefficients
$$
\begin{bmatrix}a+b+c+d\\ a,b,c,d\end{bmatrix}_q=\frac{(a+b+c+d)!_q}{a!_q\,b!_q\,c!_q\,d!_q}
$$
are known to be  polynomials in $\N[q]$ for integral $a,b,c,d\geq 0$, see
\cite{GJ83}.  Hence,
the right-hand side of \eqref{linerization3} is   a polynomial in $\N[y,q]$, and we are done. 
\end{proof}

For arbitrary $\alpha$, a combinatorial interpretation of \eqref{eq:lin3} was given 
by Foata and Zeilberger~\cite{FZ88} with $y=q=1$, and generalized 
 by the second author~\cite{Zeng92} to  $q=1$ (see also~\cite{Zeng16}), while for   
$\alpha=0$ a combinatorial interpretation of \eqref{eq:lin3} was given 
by Kasraoui et al.~\cite{KSZ11}.
 Thus,  the following problem suggests itself.

 \medskip
\noindent \textbf{Problem.}
\emph{What is the
combinatorial interpretation of \eqref{eq:lin3} for $\alpha\in \N_0$ 
unifying the two  special  cases with $\alpha=0$ or $q=1$?}

\section{Connection with rook polynomials and matching polynomials}
In this section we show how
the model of $\alpha$-Laguerre configurations is connected with the models 
 of non-attacking rook placements and matchings of complete bipartite graphs.
\subsection{Interpretation in rook polynomials}
An $m$ by $n$ board $B$ is a subset of an $m\times n$ grid of cells (or squares). 
A rook is a chessboard piece which takes on rows and columns.  If $r_k$ is the number of 
ways of putting $k$ non-attacking rooks on this board, then the ordinary 
rook polynomial is defined by 
$$
R_{m,n}(x)=\sum_{k}r_kx^k.
$$
Thus, the Laguerre polynomials \eqref{eq:explag} can be written as
\begin{equation}
L^{(\alpha)}_n(x)=(-1)^n n!\, R_{n,n+\alpha}(-x^{-1}).
\end{equation}
   A $k$-\emph{rook placement} on a board $B$ is a subset $C\subset B$ of $k$ cells such that no two cells are in the same row or column of $B$.   We refer the reader to Riordan's  classical book \cite[Chapters~7 and~8]{RI58} for many problems  formulated in terms of configurations of non-attacking rooks on ``chessboards'' of various shapes.

  We label the rows of the grid from  top to bottom and the  columns from left  to right
in the same way as referring to the entries of  an $m\times n$ matrix.  Recall that an \emph{integer partition} is a sequence of positive integers 
$\mu:=(\mu_1,\mu_2,\ldots,\mu_l)$ such that 
$\mu_1\geq\mu_2\geq\cdots\geq\mu_l>0$. We also use the notation 
$\mu=(n_1^{m_1}, \ldots, n_k^{m_k})$ to denote the partition with $m_i$ parts equal to $n_i$ for $i=1, \ldots, k$.
For convenience, we shall identify $\mu$ with its Ferrers board $B_\mu$,
which is defined   as  the  subset $\{(i,j): 1\leq i\leq \mu_j,\; 1\leq j\leq l \}$ of $\N\times \N$.
For a placement $C$ of rooks on $B_\mu$,
 the inversion number  $\inv(C)$ is defined as follows:
 for each rook (cell) in $C$ cross out  all the cells which  are below or to the right of the rook; then $\inv(C)$ is the number of squares of $F_\mu$ that are not crossed out.  An example  is shown in Figure~\ref{fig3}. 
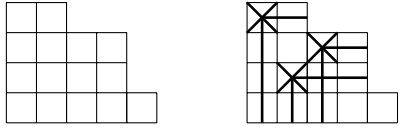
\begin{figure}[t]
\begin{center}
\begin{tikzpicture}[scale=0.4]
\draw[step=1] (0,0) grid (2,4);
\draw[step=1] (2,0) grid (4,3);
\draw[step=1] (4,0) grid (5,1);
\draw[step=1] (8,0) grid (10,4);
\draw[step=1] (10,0) grid (12,3);
\draw[step=1] (12,0) grid (13,1);
\draw (8.5,3.5) node {\Huge$\mathbf{\times}$};
\draw (10.5,2.5) node {\Huge$\mathbf{\times}$};
 \fill (9.5,1.5) node {\Huge$\mathbf{\times}$};
\draw[line width=1] (8.5,3.5)--(8.5,0);
\draw[line width=1] (8.5,3.5)--(10,3.5);
\draw[line width=1] (9.5,1.5)--(9.5,0);
\draw[line width=1] (10.5,2.5)--(12,2.5);
\draw[line width=1] (10.5,2.5)--(10.5,0);
\draw[line width=1] (9.5,1.5)--(12,1.5);
\end{tikzpicture}
\end{center}
\caption{The Ferrers board of shape $\mu=(4,4,3,3,1)$ and a placement~$C$ of three non-attacking rooks  with $\inv(C)=3$.}
\label{fig3}
\end{figure}
\begin{defi} 
For integers $n, k\geq 0$ and $\alpha\geq -1$, let $\mathbf{m}=(m_0, \ldots, m_\alpha)$ and $\mathbf{n}=(n_1, \ldots, n_k)$ be nonnegative integer sequences such that  $m_0+m_1+ \cdots +m_\alpha+ n_1+ \cdots+n_k=n$ with 
$m_i\geq 0$ and $n_j\geq 1$.
We define $\mathcal{B}^{(\alpha)}_{n,k}(\mathbf{m}; \mathbf{n})$ 
as the set of $n\times n$ squares of color shape $B:=(B^{(1)}; B^{(2)})$ with
\begin{subequations}
\label{eq36}
\begin{align}
B^{(1)}:=&(n^{m_0}, \ldots,n^{m_\alpha}),\\
B^{(2)}:=&(n^{n_1}, \ldots, n^{n_k}).
\end{align}
\end{subequations}
By convention,
 if $\alpha=-1$ {\em(}respectively $k=0${\em)},    then $B^{(1)}=\emptyset$ 
{\em(}respectively $B^{(2)}=\emptyset${\em)}.
 Let
\[
\cw(B)=\sum_{i=0}^{\alpha}m_i \quad\text{and}\quad
\cs(B)=\sum_{i=0}^{\alpha}i\cdot m_i.
\]
 Let $\mathcal{B}\mathcal{C}_{n,k}^{(\alpha)}(\mathbf{m}; \mathbf{n})$ denote the set 
 of all ordered pairs $\mathcal{R}=(B, C)$, where 
 $B\in \mathcal{B}^{(\alpha)}_{n,k}(\mathbf{m}; \mathbf{n})$ and $C$ is an 
 $n$-rook placement on  $B$ 
such that 
\begin{align}
\min(C\cap B_1^{(2)})<\min(C\cap B_2^{(2)})<\cdots <\min(C\cap B_k^{(2)}),
\end{align}
where $\min(C\cap B_1^{(2)})$ is the minimum row index of cells in $C\cap B_1^{(2)}$.
 For each block $B_i^{(2)}=(n^{n_i})$, we define 
 $\ind(C\cap B_i^{(2)})$ as the number of rooks in $C\cap B_i^{(2)}$ 
 whose column indices are greater than the column index of the 
 rook which  has the maximum row index in $B_i^{(2)}$, and 
 let $\ind(\mathcal{R})=\sum_{i=1}^{k}\ind(C\cap B_i^{(2)})$.
 Let
 $$
 \mathcal{B}\mathcal{C}_{n,k}^{(\alpha)}=
 \bigcup_{\mathbf{m}, \mathbf{n}}\mathcal{B}\mathcal{C}_{n,k}^{(\alpha)}(\mathbf{m}; \mathbf{n}) \hspace{1cm}\textrm{with}\qquad \sum_{i=0}^\alpha m_i
 +\sum_{j=1}^kn_j=n.
 $$
An element  $\mathcal{R}=(B, C)\in \mathcal{B}\mathcal{C}_{n,h}^{(\alpha)}$ 
is called a colored rook configuration.
\end{defi}
\begin{remark}
One  can imagine that each  column of a board  in $ \mathcal{B}\mathcal{C}_{n,k}^{(\alpha)}(\mathbf{m}; \mathbf{n}) $  is colored with colors in  
$\{0, 1, \ldots, \alpha+k\}$  from left to right as follows:
the first $m_0$ columns get color $0$, the next $m_1$ columns get color $1$, \ldots, the last $n_k$ columns get color $\alpha+k$.
\end{remark}
\begin{thm}\label{th:rook}
The coefficient $\ell^{(\alpha)}_{n,k}(y;q)$ in \eqref{lag-combinatrorial}
 is the following generating polynomial of colored rook configurations   in $\BC_{n,k}^{(\alpha)}$:
$$
\ell^{(\alpha)}_{n,k}(y;q)=\sum_{\mathcal{R}=(B,C)\in\BC_{n,k}^{(\alpha)}}
y^{\cw(B)+\ind(\mathcal{R})}q^{\inv(C)+\cs(B)-\ind(\mathcal{R})}.
$$
\end{thm}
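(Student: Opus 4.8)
The plan is to prove Theorem~\ref{th:rook} by constructing an explicit, statistic-preserving bijection $\Psi\colon \lc_{n,k}^{(\alpha)}\to \BC_{n,k}^{(\alpha)}$ and then feeding it into the expansion of $\ell_{n,k}^{(\alpha)}(y;q)$ furnished by Theorem~\ref{key}. Starting from an $\alpha$-Laguerre configuration $(\sigma,\lambda)$, I write $\sigma=\sigma_0\cdots\sigma_\alpha$ in unicoloured form and set $m_i=|\hat{\sigma}_i|$ and $n_j=|\lambda_j|$; these fix the colour shape $B=(B^{(1)};B^{(2)})$ with $B^{(1)}=(n^{m_0},\ldots,n^{m_\alpha})$ and $B^{(2)}=(n^{n_1},\ldots,n^{n_k})$. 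Let $\pi=\underline{\sigma}.\underline{\lambda}=\pi_1\cdots\pi_n$, a permutation word of $[n]$; the placement $C$ is defined by putting one rook in column $c$ and row $\pi_c$ for each $c\in[n]$. Since every column of the square board has height $n$, the colour shape merely partitions the columns into consecutive blocks and leaves the board itself a full $n\times n$ square.

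First I would verify that $\Psi$ takes values in $\BC_{n,k}^{(\alpha)}$ and is invertible. Because the lists $\lambda_1,\ldots,\lambda_k$ are ordered by increasing minimum element, the minimal row indices $\min(C\cap B_j^{(2)})$ increase with $j$, so $C$ meets the required ordering condition. For the inverse, the block structure recorded in $B$ says which columns are cyclic of each colour and which form the successive path blocks; reading the rook rows column by column recovers $\pi$, hence each path list $\lambda_j$ directly, while within a colour block the set of values occurring is exactly the support of $\sigma_i$, so that block word is the linear representation of a unique permutation $\sigma_i$ of its support. This reconstructs $(\sigma,\lambda)$, so $\Psi$ is a bijection.

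It then remains to match the four statistics. The identity $\cw(B)=\sum_i m_i=|\underline{\sigma}|$ is immediate, and $\ind(\mathcal{R})=\rl(\lambda)$ follows on noting that the maximal row index inside a path block is attained at the (unique) largest element of $\lambda_j$, whence the block's rooks of larger column index are precisely the entries of $\lambda_j$ lying after its maximum. For $\cs(B)=\inv(\underline{\underline{\sigma}})$, the word $\underline{\underline{\sigma}}=0^{m_0}10^{m_1}1\cdots 10^{m_\alpha}$ has $\sum_{j=1}^{\alpha}(m_j+\cdots+m_\alpha)=\sum_{i=0}^{\alpha}i\,m_i$ inversions, which is $\cs(B)$. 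Granting these together with the identity $\inv(C)=\inv(\underline{\sigma}.\underline{\lambda})$ established below, the valuation $w(\sigma,\lambda)$ from \eqref{w-valuation} transforms into $y^{\cw(B)+\ind(\mathcal{R})}q^{\inv(C)+\cs(B)-\ind(\mathcal{R})}$.

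The crux, and the step deserving a careful argument, is this last identity $\inv(C)=\inv(\pi)$ relating the Garsia--Remmel inversion number of the placement to the number of inversions of the word. I would establish it straight from the crossing-out rule: a non-rook cell $(r,c')$ survives precisely when the rook in its own column is not above it and the rook in its own row is not to its left, that is, when $r<\pi_{c'}$ and $\pi^{-1}(r)\geq c'$. Writing $c=\pi^{-1}(r)$ turns these conditions into $c'<c$ and $\pi_{c'}>\pi_c$, so surviving cells correspond bijectively to inversions of $\pi$, giving $\inv(C)=\inv(\pi)$. With this in hand, Theorem~\ref{key} gives $\ell_{n,k}^{(\alpha)}(y;q)=\sum_{(\sigma,\lambda)}w(\sigma,\lambda)=\sum_{\mathcal{R}=(B,C)\in\BC_{n,k}^{(\alpha)}}y^{\cw(B)+\ind(\mathcal{R})}q^{\inv(C)+\cs(B)-\ind(\mathcal{R})}$, which is the assertion. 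The main risk lies in the bookkeeping: one must hold the row/column and below/right conventions fixed so that genuine inversions, and not co-inversions, appear in $\inv(C)$.
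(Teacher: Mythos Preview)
Your proof is correct and follows essentially the same bijection as the paper's own argument: encode the concatenated word $\underline{\sigma}.\underline{\lambda}$ as a full rook placement on the $n\times n$ board with column blocks determined by $(|\sigma_0|,\ldots,|\sigma_\alpha|;|\lambda_1|,\ldots,|\lambda_k|)$, match the four statistics, and invoke Theorem~\ref{key}. Your treatment is in fact more careful than the paper's, which simply asserts the equalities $\inv(C)=\inv(\underline{\sigma}.\underline{\lambda})$, $\ind(\mathcal{R})=\rl(\lambda)$ and $\cs(B)=\inv(\underline{\underline{\sigma}})$ without the explicit verifications you supply.
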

\begin{proof} Let $\lc_{n,k}^{(\alpha)}(\mathbf{m}; \mathbf{n})$ be the set of
$\rho:=(\sigma_0,\dots,\sigma_{\alpha};\, \la_1,\dots, \la_k)\in \lc_{n,k}^{(\alpha)}$ such that $|\rho|=(|\sigma_0|,\ldots,|\sigma_{\alpha}|;\, |\la_1|,\ldots, |\la_k|)=(\mathbf{m}; \mathbf{n})$.
We define the map
$\phi:\lc_{n,k}^{(\alpha)}(\mathbf{m}; \mathbf{n})\longrightarrow 
\BC_{n,k}^{(\alpha)}(\mathbf{m}; \mathbf{n})$ by  $\phi(\rho)=(B, C)$
for $\rho=(\sigma_0,\dots,\sigma_{\alpha};\, \la_1,\dots, \la_k)\in \lc_{n,k}^{(\alpha)}(\mathbf{m}; \mathbf{n})$ as follows:
\begin{itemize}
\item[(i)]
The colored board $B=(B^{(1)},B^{(2)})$ is given by 
$$B^{(1)}=(n^{|\sigma_0|}, \ldots, n^{|\sigma_\alpha|})\quad \text{and}\quad 
B^{(2)}=(n^{|\la_1|}, n^{|\la_2|},\ldots,n^{|\la_k|}).
$$
\item[(ii)] 
If $w:=\hat\sigma_0 \hat\sigma_1\cdots \hat\sigma_{\alpha}\la_1 \la_2\cdots \la_k=
w_1\ldots w_n$, which is a permutation of $[n]$, let $C=\{(j, w_j): j\in [n]\}$.
\end{itemize}
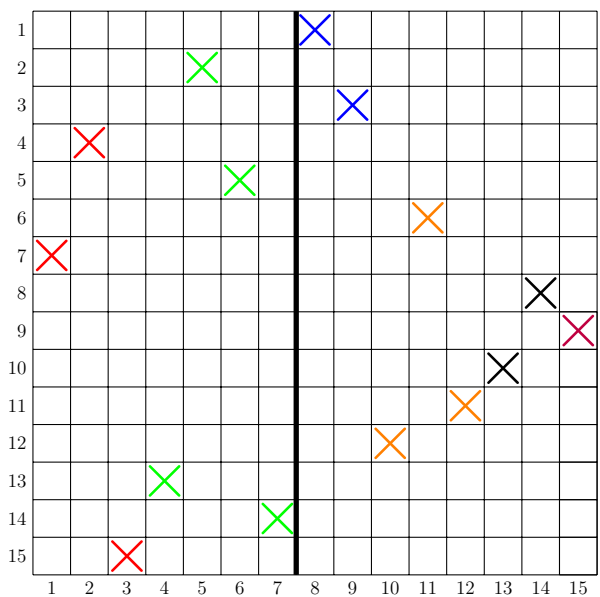
\begin{figure}
\begin{tikzpicture}[scale=0.5]
\draw[step=1] (14,0) grid (29,15);
\draw[step=1] (28,0) grid (29,7);
\draw[line width=2] (21,0)--(21,15);

\node[left,scale=0.6] at (14,0.5){$15$};
\node[left,scale=0.6] at (14,1.5){$14$};
\node[left,scale=0.6] at (14,2.5){$13$};
\node[left,scale=0.6] at (14,3.5){$12$};
\node[left,scale=0.6] at (14,4.5){$11$};
\node[left,scale=0.6] at (14,5.5){$10$};
\node[left,scale=0.6] at (14,6.5){$9$};
\node[left,scale=0.6] at (14,7.5){$8$};
\node[left,scale=0.6] at (14,8.5){$7$};
\node[left,scale=0.6] at (14,9.5){$6$};
\node[left,scale=0.6] at (14,10.5){$5$};
\node[left,scale=0.6] at (14,11.5){$4$};
\node[left,scale=0.6] at (14,12.5){$3$};
\node[left,scale=0.6] at (14,13.5){$2$};
\node[left,scale=0.6] at (14,14.5){$1$};

\node[below,scale=0.6] at (14.5,0){$1$};
\node[below,scale=0.6] at (15.5,0){$2$};
\node[below,scale=0.6] at (16.5,0){$3$};
\node[below,scale=0.6] at (17.5,0){$4$};
\node[below,scale=0.6] at (18.5,0){$5$};
\node[below,scale=0.6] at (19.5,0){$6$};
\node[below,scale=0.6] at (20.5,0){$7$};
\node[below,scale=0.6] at (21.5,0){$8$};
\node[below,scale=0.6] at (22.5,0){$9$};
\node[below,scale=0.6] at (23.5,0){$10$};
\node[below,scale=0.6] at (24.5,0){$11$};
\node[below,scale=0.6] at (25.5,0){$12$};
\node[below,scale=0.6] at (26.5,0){$13$};
\node[below,scale=0.6] at (27.5,0){$14$};
\node[below,scale=0.6] at (28.5,0){$15$};

\draw[red] (14.5,8.5) node {\Huge$\mathbf{\times}$};
\draw[red] (15.5,11.5) node {\Huge$\mathbf{\times}$};
\draw[red]  (16.5,0.5) node {\Huge$\mathbf{\times}$};
\draw[green](17.5,2.5) node {\Huge$\mathbf{\times}$};
\draw[green] (18.5,13.5) node {\Huge$\mathbf{\times}$};
\draw[green] (19.5,10.5) node {\Huge$\mathbf{\times}$};
\draw[green] (20.5,1.5) node {\Huge$\mathbf{\times}$};
\draw[blue] (21.5,14.5) node {\Huge$\mathbf{\times}$};
\draw[blue] (22.5,12.5) node {\Huge$\mathbf{\times}$};
\draw[orange] (23.5,3.5) node {\Huge$\mathbf{\times}$};
\draw[orange] (24.5,9.5) node {\Huge$\mathbf{\times}$};
\draw[orange]  (25.5,4.5) node  {\Huge$\mathbf{\times}$};
\draw (26.5,5.5) node {\Huge$\mathbf{\times}$};
\draw (27.5,7.5) node {\Huge$\mathbf{\times}$};
\draw[purple] (28.5,6.5)  node {\Huge$\mathbf{\times}$};
\end{tikzpicture}
\caption{The colored rook configuration $\mathcal{R}$ corresponding to the $1$-Laguerre 
configuration in Figure~\ref{lagconf2} with  $(\mathbf{m};\mathbf{n}) =((3,4); (2, 3,2,1))$.}
\label{fig2.1}
\end{figure}
It is clear that  $\phi(\rho)\in\BC_{n,k}^{(\alpha)}$, and the procedure is 
reversible. Hence $\phi$ is a bijection.
It is easy to verify that 
$\inv(C)=\inv(\rho)$, $\ind(B^{(2)}_i)=\rl(\la_i)$, and $\cs(B)=\sum_{i=0}^{\alpha}i|\sigma_i|$, which  implies that 
\begin{align*}
{|\underline{\sigma}|+\rl(\lambda)}&=\cw(B)+\ind(\mathcal{R});\\
{\inv(\underline{\sigma}.\underline{\lambda})-\rl(\lambda)+\inv(\underline{\underline{\sigma}})}&=\inv(C)+\cs(B)-\ind(\mathcal{R}).
\end{align*}
The result then follows from Theorem~\ref{key}.
\end{proof}
\begin{example}\label{exam1}
Let  $\rho=((7\,4)(15),(13\,2\,5)(14);1\,3, 12\,6\,11,10\,8,9)\in\lc_{15,4}^{(1)}$.
Then $\phi$ maps $\rho$ to the placement of 15 rooks on the  board $B=({15^7};15^2,10^3,8^2,7)$ shown in Figure~\ref{fig2.1}. We find
$\cw(B)=1,\;\cs(B)=4;\;
\inv(C)=52$ and  $\ind(\mathcal{R})=3$.
\end{example}
\subsection{Interpretation in matching polynomials}
Recall that a  \emph{matching} of a graph $G$  is a set of edges without 
common vertices.  For any graph $G$ with $n$ vertices,  the \emph{matching polynomial} of $G$ is defined by 
$$
m(G, x)
=\sum_{k=0}^{\lfloor n/2\rfloor}(-1)^k m_k x^{n-2k},
$$
where $m_k$ is the number of  $k$-edge matchings of $G$. 
Let  
 $K_{n, m}$ denote  the  set of complete bipartite graphs
on the two disjoint sets $A=[n]$ and 
$B=\{1', \ldots, m'\}$, that is, there is an edge $(a, b)$ if and only if $a\in A$ and $b\in B$.
From the explicit formula \eqref{eq:explag} it is quite easy  to 
derive the connection formula 
\begin{align}\label{lag-matching}
m(K_{n,n+\alpha}, x)= x^{\alpha}L_n^{(\alpha)}(x^{2}),\qquad \alpha\geq -1.
\end{align}
 Godsil and Gutman~\cite{GG81}  proved \eqref{lag-matching}
by showing that the matching polynomials satisfy the same three-term recurrence relation
\eqref{eq:recurr}.  Here we give a simple bijection between our $\alpha$-Laguerre configuration model 
and  the above matching model  of complete bipartite graphs. 
Let $\mathcal{M}^{n-k}_{n,m}$ be the set of matchings of 
$K_{n, m}$ with $n-k$ edges.

\begin{prop} For integers  $n,k\geq 1$ and $\alpha\geq -1$,
there exists an explicit  bijection $
\phi:\: \mathcal{LC}_{n,k}^{(\alpha)}\:\longrightarrow\:
\mathcal{M}^{n-k}_{n,n+\alpha}$.
\end{prop}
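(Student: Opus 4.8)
The plan is to exploit the product decomposition $\mathcal{LC}_{n,k}^{(\alpha)}=\mathcal{S}^{(\alpha)}\cdot\Lin^{(k)}$ and to realize a matching of $K_{n,n+\alpha}$ as a partial injection $g\colon [n]\rightharpoonup [n]\cup E$ with $|\mathrm{dom}(g)|=n-k$, where $E=\{n+1,\dots,n+\alpha\}$ are the $\alpha$ extra bottom vertices and the edge set is $\{(i,g(i)'):i\in\mathrm{dom}(g)\}$. Writing a configuration as $(\sigma,\lambda)$ with $\sigma\in\mathcal{S}^{(\alpha)}[A_\sigma]$ (the colored cycles) and $\lambda=(\lambda_1,\dots,\lambda_k)\in\Lin^{(k)}[B_\lambda]$ (the $k$ paths), $A_\sigma\uplus B_\lambda=[n]$, I would define $g$ separately on $B_\lambda$ and on $A_\sigma$ and then check that the two pieces glue to a matching with exactly $n-k$ edges.

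On the path part this is immediate: each strict list $\lambda_i=a_1a_2\cdots a_{m_i}$ contributes the edges $(a_j,a_{j+1}')$ for $1\le j<m_i$, i.e.\ $g(a_j)=a_{j+1}$, so $g|_{B_\lambda}$ is the successor map of the $k$ paths; its image lies in $B_\lambda$, it is undefined exactly at the $k$ sinks, and it uses $|B_\lambda|-k$ edges. The real content is the cycle part, for which I would invoke the classical bijection
$$
\mathcal{B}\colon \{\text{permutations of }A_\sigma\text{ with cycles colored in }\{0,\dots,\alpha\}\}\ \longleftrightarrow\ \{\text{injections }A_\sigma\hookrightarrow A_\sigma\cup E\},
$$
whose existence reflects the identity $\sum_{\tau}(\alpha+1)^{\cyc(\tau)}=(\alpha+1)_m$ with $m=|A_\sigma|$, the $q=1$ specialization of $\prod_{i=1}^m[\alpha+i]_q$ that counts $\mathcal{S}^{(\alpha)}[A_\sigma]$. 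Concretely $\mathcal{B}$ can be produced by the standard insertion process (inserting $1,2,\dots,m$, at step $i$ choosing one of $\alpha+i$ options, matching the $i-1$ ``insert after an existing element'' choices on the permutation side with $i-1$ of the injection choices and the $\alpha+1$ ``open a new colored cycle'' choices with the remaining $\alpha+1$), or equivalently through the map $\sigma\mapsto(\tilde\sigma,\underline{\underline\sigma})$ introduced earlier in this section followed by interleaving the symbols of $E$ into $\underline\sigma$ at the positions prescribed by $\underline{\underline\sigma}$. I set $g|_{A_\sigma}=\mathcal{B}(\sigma)$, so that every vertex of $A_\sigma$ lies in $\mathrm{dom}(g)$ and the cycle part uses exactly $m$ edges, with bottoms in $A_\sigma\cup E$.

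Gluing the two pieces gives a partial injection $g$ with $\mathrm{dom}(g)=A_\sigma\cup(B_\lambda\setminus\{\text{sinks}\})$, hence $|\mathrm{dom}(g)|=m+(|B_\lambda|-k)=n-k$; since the path bottoms lie in $B_\lambda$, the cycle bottoms in $A_\sigma\cup E$, and these are disjoint while $\mathcal{B}(\sigma)$ is injective, $g$ is a genuine matching of $K_{n,n+\alpha}$ with $n-k$ edges, which defines $\phi(\sigma,\lambda)=g$. To invert $\phi$ I would read off the connected components of the functional digraph of $g$ on $[n]\cup E$: the cycles and the paths terminating at a vertex of $E$ constitute precisely $\mathcal{B}(\sigma)$ (every vertex of $A_\sigma$ has an out-edge, so $\sigma$ contributes no sink in $[n]$), whereas the paths terminating at a vertex of $[n]$ — including isolated vertices, which are trivial such paths — are exactly the $k$ lists of $\lambda$. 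Thus $B_\lambda$ is recovered as the vertex set of the $[n]$-terminating paths and $A_\sigma$ as the remainder, and $\mathcal{B}^{-1}$ applied to $g|_{A_\sigma}$ returns the colored permutation; in particular the number of $[n]$-terminating paths equals $n-|\mathrm{dom}(g)|=k$, so $\phi$ respects the gradings $\mathcal{LC}_{n,k}^{(\alpha)}\to\mathcal{M}_{n,n+\alpha}^{n-k}$.

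The main obstacle is the cycle-part bijection $\mathcal{B}$. The tempting rule ``close up each nonzero-colored cycle into a path ending at the extra vertex of that color'' fails, because a color in $\{1,\dots,\alpha\}$ may be repeated among several cycles while the extra vertices are distinct; the correct $\mathcal{B}$ must let repeated colors correspond to longer paths and differently shaped digraphs, and the delicate task is to present it together with a clean, explicitly computable inverse. Everything else — the disjointness bookkeeping, the edge count $n-k$, and the component reading that separates the cycle part from the path part — is routine once $\mathcal{B}$ and $\mathcal{B}^{-1}$ are fixed. One may alternatively run the argument through the colored rook configurations of Theorem~\ref{th:rook}, but the coloring of the cycle columns must still be converted into extra-vertex usage, which is again $\mathcal{B}$.
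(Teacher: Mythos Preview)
Your overall architecture matches the paper's: handle the $k$ lists by the successor map, reduce the colored-cycle part to a bijection $\mathcal{B}$ from $\mathcal{S}^{(\alpha)}[A_\sigma]$ to injections $A_\sigma\hookrightarrow A_\sigma\cup E$, glue, and invert by reading connected components of the functional digraph. Your disjointness and edge-count checks are correct, as is your component analysis for the inverse.

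The gap is the one you yourself flag: you never actually write down $\mathcal{B}$, and the proposition asks for an \emph{explicit} bijection. Your interleaving sketch produces a word on $A_\sigma\cup E$ but stops short of extracting an injection defined on all of $A_\sigma$ (taking successors leaves the last letter domain-less; reading the word as a permutation of $A_\sigma\cup E$ and then restricting to $A_\sigma$ is $\alpha!$-to-one). The paper's remedy is simple and is precisely the missing idea: treat color~$0$ differently from colors $1,\dots,\alpha$. Keep $\sigma_0$ as a permutation, contributing the edges $(a,\sigma_0(a)')$; for each $i\ge 1$ write the linear representation $\hat\sigma_i$ followed by the single extra letter $n+i$, concatenate to obtain the word $\hat\sigma_1\,(n{+}1)\,\hat\sigma_2\,(n{+}2)\cdots\hat\sigma_\alpha\,(n{+}\alpha)$, and declare $(a,b')$ an edge whenever $a,b$ are consecutive in this word with $a\in[n]$. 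The passage to $\hat\sigma_i$ canonically merges all cycles of color~$i$ into one word, dissolving exactly the ``repeated color'' obstacle you raised, and because the word ends in $n+\alpha\notin[n]$ every $a\in A_\sigma$ acquires a successor, so the resulting injection is total on $A_\sigma$. The inverse is then immediate: cycles of the functional digraph give $\sigma_0$, the path terminating at $n+i$ with its last vertex removed is $\hat\sigma_i$, and the $k$ paths terminating in $[n]$ are the $\lambda_j$.
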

\begin{proof}
We construct such a bijection $\phi$.  Let
$\rho=(\sigma_0,\sigma_1,\dots,\sigma_{\alpha};\la_1,\la_2,\dots,\la_k)\in\mathcal{LC}_{n,k}^{(\alpha)}$ be 
an $\alpha$-Laguerre configuration. We define 
a matching
 $\gamma$ of $K_{n, n+\alpha}$ such that
 $(a,b')\in A\times B$ is an edge in $\gamma$  if and only if  
 $(a, b)$ satisfies   one of the  following three conditions:
 \begin{enumerate}
\item $\sigma_0(a)=b$, i.e., the image of $a$ is $b$ through the action of permutation $\sigma_0$;
\item  $a$ and $b$ are consecutive letters in the word
 $\hat\sigma_1(n+1)\hat\sigma_2(n+2)\ldots \hat\sigma_\alpha (n+\alpha)$;
\item $a$ and $b$ are consecutive letters in the word $\la_j$ for some $j\in [k]$.
\end{enumerate}
By convention,  if $\alpha=-1$ (respectively $\alpha=0$) there are no words of types (1) and (2) (respectively type (2)). Since $\sigma_0\sigma_1\ldots \sigma_\alpha \la_1\ldots \la_k$ is a permutation of $[n]$,
it is clear that there are $n-k$ such edges $(a,b')$. The above procedure is obviously reversible.
\end{proof}
\begin{example}
 For the  $1$-Laguerre configuration
 $$\rho=((7\,4)(15),(13\,2\,5)(14);1\,3, 12\,6\,11,10\,8,9)\in\lc_{15,4}^{(1)}$$
 in  Example~\ref{example1}, the corresponding  matching $\gamma$ of $K^{11}_{15, 16}$ is shown in Figure~\ref{matching}.

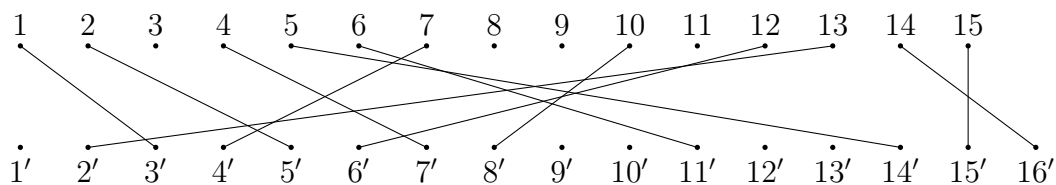
\begin{figure}[t]
\begin{tikzpicture}[scale=0.9]
\node[above] at (1,0.5) {$1$};
\node[above] at (2,0.5) {$2$};
\node[above] at (3,0.5) {$3$};
\node[above] at (4,0.5) {$4$};
\node[above] at (5,0.5) {$5$};
\node[above] at (6,0.5) {$6$};
\node[above] at (7,0.5) {$7$};
\node[above] at (8,0.5) {$8$};
\node[above] at (9,0.5) {$9$};
\node[above] at (10,0.5) {$10$};
\node[above] at (11,0.5) {$11$};
\node[above] at (12,0.5) {$12$};
\node[above] at (13,0.5) {$13$};
\node[above] at (14,0.5) {$14$};
\node[above] at (15,0.5) {$15$};
\fill (1,0.5) circle (1.1pt);
\fill (2,0.5) circle (1.1pt);
\fill (3,0.5) circle (1.1pt);
\fill (4,0.5) circle (1.1pt);
\fill (5,0.5) circle (1.1pt);
\fill (6,0.5) circle (1.1pt);
\fill (7,0.5) circle (1.1pt);
\fill (8,0.5) circle (1.1pt);
\fill (9,0.5) circle (1.1pt);
\fill (10,0.5) circle (1.1pt);
\fill (11,0.5) circle (1.1pt);
\fill (12,0.5) circle (1.1pt);
\fill (13,0.5) circle (1.1pt);
\fill (14,0.5) circle (1.1pt);
\fill (15,0.5) circle (1.1pt);
\fill (1,-1) circle (1.1pt);
\fill (2,-1) circle (1.1pt);
\fill (3,-1) circle (1.1pt);
\fill (4,-1) circle (1.1pt);
\fill (5,-1) circle (1.1pt);
\fill (6,-1) circle (1.1pt);
\fill (7,-1) circle (1.1pt);
\fill (8,-1) circle (1.1pt);
\fill (9,-1) circle (1.1pt);
\fill (10,-1) circle (1.1pt);
\fill (11,-1) circle (1.1pt);
\fill (12,-1) circle (1.1pt);
\fill (13,-1) circle (1.1pt);
\fill (14,-1) circle (1.1pt);
\fill (15,-1) circle (1.1pt);
\fill (16,-1) circle (1.1pt);
\node[below] at (1,-1) {$1'$};
\node[below] at (2,-1) {$2'$};
\node[below] at (3,-1) {$3'$};
\node[below] at (4,-1) {$4'$};
\node[below] at (5,-1) {$5'$};
\node[below] at (6,-1) {$6'$};
\node[below] at (7,-1) {$7'$};
\node[below] at (8,-1) {$8'$};
\node[below] at (9,-1) {$9'$};
\node[below] at (10,-1) {$10'$};
\node[below] at (11,-1) {$11'$};
\node[below] at (12,-1) {$12'$};
\node[below] at (13,-1) {$13'$};
\node[below] at (14,-1) {$14'$};
\node[below] at (15,-1) {$15'$};
\node[below] at (16,-1) {$16'$};
\draw (1,0.5)--(3,-1);
\draw (2,0.5)--(5,-1);
\draw (4,0.5)--(7,-1);
\draw (5,0.5)--(14,-1);
\draw (6,0.5)--(11,-1);
\draw (7,0.5)--(4,-1);
\draw (10,0.5)--(8,-1);
\draw (12,0.5)--(6,-1);
\draw (13,0.5)--(2,-1);
\draw (15,0.5)--(15,-1);
\draw (14,0.5)--(16,-1);
\end{tikzpicture}
\caption{The matching corresponding to the 
 $1$-Laguerre configuration in Figure~\ref{lagconf2}}
\label{matching}
\end{figure}
\end{example}

\begin{remark}
We leave it to the interested 
reader to find the $(q,y)$-version of the above matching polynomials for 
$(q,y)$-Laguerre polynomials $L^{(\alpha)}_{n}(x;y\,|\,q)$.
\end{remark}
\section*{Acknowledgement} 
We    thank    the    anonymous    reviewers
    for     their careful    reading    of    our    manuscript    and    their    many    
helpful    comments and    suggestions.
%

\end{document}